\documentclass{amsart}
%\widowpenalty10000
%\clubpenalty10000
%%%%%%%%%%%%%%%%%%%%%%%%%%%%%%%%%%%%%%%%%%%%%%%%%%%%%%%%
%\oddsidemargin 0mm
%\evensidemargin 0mm
%\topmargin 0mm
%\textwidth 170mm
%\textheight 220mm
%\tolerance=9999
%%%%%%%%%%%%%%%%%%%%%%%%%%%%%%%%%%%%%%%%%%%%%%%%%%%%%%%%%%%%%%%%%
%\usepackage{amssymb,amstext,amsmath,amscd,amsthm,amsfonts,enumerate,graphicx,latexsym,stmaryrd,multicol}

%%%%%%%%%%%%%%%%%%%%%%%%%%%%%%%%%%%%
\usepackage{hyperref}
\usepackage{comment} 
%\usepackage{cleveref}  
%\hypersetup{colorlinks=true} 
%%%%%%%%%%%%%%%%%%%%%%%%%%%%%%%%%%%%

\usepackage[usenames]{color}
\usepackage[all]{xy}

%\documentclass[a4,12pt]{amsart}
%%%%%%%%%%%%%%%%%%%%%%%%%%%%%%%%%%%%%%%%%%%%%%%%%%%%%%%%
%\oddsidemargin 0mm
%\evensidemargin 0mm
%\topmargin 0mm
%\textwidth 170mm
%\textheight 220mm
%\tolerance=9999
%%%%%%%%%%%%%%%%%%%%%%%%%%%%%%%%%%%%%%%%%%%%%%%%%%%%%%%%
\usepackage{amsfonts,amsmath,amssymb,amsthm,amscd,amsxtra}
\usepackage{enumerate,epsfig,verbatim}

\usepackage{enumerate,verbatim}
\usepackage{centernot}
\usepackage{todonotes}

\usepackage[T1]{fontenc}
\usepackage[usenames,dvipsnames]{pstricks}
\usepackage[mathscr]{eucal}
\usepackage[notcite,notref]{}
\usepackage{tikz}
\usetikzlibrary{matrix,quotes}
\usepackage[notcite, notref]{}

\usepackage[usenames,dvipsnames]{pstricks}
\usepackage{amsfonts,amsmath,amssymb,amsthm,amscd,amsxtra}
\usepackage{tikz-cd} 
\usepackage{enumerate,verbatim}
\usepackage{mathptmx}
\usepackage[notcite, notref]{}

\usepackage{amsfonts}
\usepackage[margin=1.4in]{geometry}
\usepackage{color}
\usepackage[notcite,notref]{}
\usepackage{url}
\usepackage{datetime}
\usepackage{mathtools}

\usepackage{amssymb}
\usepackage{amsmath}
\usepackage{amsfonts}
\usepackage{amsmath}
\usepackage{amsthm}
\usepackage{amssymb}
\usepackage{amscd}
\usepackage{amsfonts}
\usepackage{amsxtra} 

\usepackage{epsfig}
\usepackage{verbatim}

\SelectTips{cm}{}

\usepackage{amsfonts,amsmath,amssymb,amsthm,amscd,amsxtra}
\usepackage{enumerate,epsfig,verbatim}

\usepackage{amssymb,amstext,amsmath,amscd,amsthm,amsfonts,enumerate,graphicx,latexsym}
\usepackage[usenames]{color}

\usepackage{nicefrac}
\usepackage{array}

\usepackage{comment}
\usepackage{amssymb,amstext,amsmath,amscd,amsthm,amsfonts,enumerate,graphicx,latexsym,color,stmaryrd}
\usepackage{mathptmx}
\usepackage[all]{xy}
\usepackage{comment}
%\usepackage{cite}

%%%%%%%%%%%%%%%%%%%%%%%%%%%%%%%%%%%%%%%%%%%%%%%%%%%%%%%%
%\theoremstyle{definition}
\newtheorem{thm}{Theorem}[section]
\newtheorem{prop}[thm]{Proposition}
\newtheorem{cor}[thm]{Corollary}

% \newtheorem*{conj1}{Direct Summand Conjecture}
% \newtheorem*{lem1}{Depth Formula}
%%%%%%%%%%%%%%%%%%%%%%%%%%%%%%%%%%%%%%%%%%%%%%%%%%%%%%%%%%%%%%%%%
\theoremstyle{definition}
\newtheorem{chunk}[thm]{\hspace*{-1.065ex}\bf}

\newtheorem{dfn}[thm]{Definition}

\newtheorem{eg}[thm]{Example}

\newtheorem{que}[thm]{Question}

\newtheorem*{Claim}{Claim}

\newtheorem{rem}[thm]{Remark}

%%%%%%%%%%%%%%%%%%%%%%%%%%%%%%%%%%%%%%%%%%%%%%%%%%%%%%%%%%%%%%%%%
\theoremstyle{remark}
%%%%%%%%%%%%%%%%%%%%%%%%%%%%%%%%%%%%%%%%%%%%%%%%%%%%%%%%%%%%%%%%%

\numberwithin{equation}{thm}
%\renewcommand{\labelitemi}{-}
%%%%%%%%%%%%%%%%%%%%%%%%%%%%%%%%%%%%%%%%%%%%%%%%%%%%%%%%%%%%%%%%%
%%%%%%%%%%%%%%%%%%%%%

\newcommand{\bZ}{\mathbb{Z}}

\newcommand{\fm}{\mathfrak{m}}

\newcommand{\bN}{\mathbb{N}}

\newcommand{\m}{\mathfrak{m}}
\newcommand{\fn}{\mathfrak{n}}
\newcommand{\q}{\mathfrak{q}}

\newcommand{\comp}{\mathbb{C}}

\def\fm{\mathfrak{m}}
\def\fp{\mathfrak{p}}

\def\Ext{\operatorname{\mathrm{Ext}}}

\def\Tor{\operatorname{\mathrm{Tor}}}
\def\q{\operatorname{\mathrm{q}}}

\def\syz{\Omega}

\def \cdim {\operatorname{CI-dim}}

\DeclareMathOperator{\Gdim}{\mathsf{G-dim}}
\DeclareMathOperator{\pd}{\mathsf{pd}}
\DeclareMathOperator{\cx}{\mathsf{cx}}

\DeclareMathOperator{\rGdim}{\operatorname{\mathsf{red-G-dim}}}

\DeclareMathOperator{\rpdim}{\operatorname{\mathsf{red-pd}}}
\newcommand{\rpm}{\raisebox{.2ex}{$\scriptstyle\pm$}}
\DeclareMathOperator{\Hdim}{\operatorname{\mathbf{\mathbb{I}}}}
\DeclareMathOperator{\rHdim}{\operatorname{\mathsf{red-\mathbb{I}}}}

\def \n {\mathfrak n} 
\def \q {\mathfrak q} 
 
%%%%%%%%%%%%%%%%%%%%%%%%%%%%%%%%%%%%%%%%%%%%%%%%%%%%%%%%%%%%%
%%%%%%%%%%%%%%%%%%%%%%%%%%%%%%%%%%%%%%%%%%%%%%%%%%%%%%%%%%%%%%%%%
%%%%%%%%%%%%%%%%%%%%%%%%%%%%%%%%%%%%%%%%%%%%%%%%%%%%%%%%%%%%%%%%%

\begin{document}
%\date{\today} 

%\baselineskip=15pt
%\baselineskip=15pt
\title{}  
\title[On the reducing projective dimension of the residue field]{On the reducing projective dimension of the residue field}

\author{Olgur Celikbas}
\address{Olgur Celikbas\\ School of Mathematical and Data Sciences, West Virginia University, 
Morgantown, WV 26506 U.S.A}
\email{olgur.celikbas@math.wvu.edu}

\author{Souvik Dey}
\address{Department of Mathematics, University of Kansas, 405 Snow Hall, 1460 Jayhawk Blvd.,
Lawrence, KS 66045, U.S.A.}
\email{souvik@ku.edu}

\author{Toshinori Kobayashi}
\address{School of Science and Technology, Meiji University, 1-1-1 Higashi-Mita, Tama-ku, Kawasaki-shi, Kanagawa 214-8571, Japan}
\email{toshinorikobayashi@icloud.com}

\author{Hiroki Matsui}
\address{Hiroki Matsui\\Department of Mathematical Sciences\\
Faculty of Science and Technology\\
Tokushima University\\
2-1 Minamijyousanjima-cho, Tokushima 770-8506, Japan}
\email{hmatsui@tokushima-u.ac.jp}

\thanks{2020 {\em Mathematics Subject Classification.} Primary 13D07; Secondary 13C12, 13D05, 13H10}
%\subjclass[2020]{Primary 13D07; Secondary 13C12, 13D05, 13H10}
\keywords{G-regular rings, minimal multiplicity, reducing projective and reducing Gorenstein dimension}
\thanks{Toshinori Kobayashi was partly supported by JSPS Grant-in-Aid for JSPS Fellows 18J20660; Hiroki Matsui was partly supported by JSPS Grant-in-Aid for Early-Career Scientists 22K13894}

\maketitle

\begin{abstract} In this paper we are concerned with certain invariants of modules, called reducing invariants, which have been recently introduced and studied by Araya - Celikbas and Araya - Takahashi. We raise the question whether the residue field of each commutative Noetherian local ring has finite reducing projective dimension and obtain an affirmative answer for the question for a large class of local rings. Furthermore, we construct new examples of modules of infinite projective dimension that have finite reducing projective dimension, and study several fundamental properties of reducing dimensions, especially properties under local homomorphisms of local rings. 
\end{abstract}

\section{introduction} Throughout, $R$ denotes a local ring (commutative and Noetherian) with unique maximal ideal $\fm$ and residue field $k$, and each $R$-module is assumed to be finitely generated. For unexplained basic terminology, such as the definition of the Gorenstein dimension, we refer the reader to \cite{AuBr, BH, Gdimbook}.

The classical homological dimensions play a significant role in the characterizations of local rings. For example, a celabrated result of Auslander-Buchsbaum-Serre \cite[2.2.7]{BH} states that a local ring $R$ is regular if and only if the residue field $k$ has finite projective dimension. It is known that similar important characterizations of local rings via other homological dimensions of the residue field, such as via the Gorenstein dimension, also hold: a local ring $R$ is Gorenstein ring if and only if the Gorenstein dimension of the residue field $k$ is finite; see, \cite[4.20]{AuBr}.% and \cite[1.3]{AGP}.

Reducing versions of the classical homological dimensions have been recently introduced and studied by Araya and Celikbas \cite{CA}, and subsequently by Araya and Takahashi \cite{AT}; see Definition \ref{rdim} and Remark \ref{diff} for the details. These dimensions are finer than the classical dimensions: examples of modules that have finite reducing projective dimension but have infinite projective dimension are abundant; see \ref{s1} and Examples \ref{maineg}, \ref{e1} and \ref{e2}. In general, unlike the aforementioned characterizations via classical dimensions, the residue field of a non-Gorenstein local ring can have finite reducing Gorenstein, or finite reducing projective dimension; see, for example, \ref{egAC}. However, in addition to these examples, there are several noteworthy characterizations of local rings via reducing dimensions that make these invariants viable notations. One such result \cite{CDKM} states that a local ring $R$ is Gorenstein if and only if each $R$-module has finite reducing Gorenstein dimension; this generalizes the well-known fact that \cite[4.20]{AuBr} a local ring $R$ is Gorenstein if and only if each $R$-module has finite Gorenstein dimension.% \cite[]{AuBr}. %Similarly, it is known that the canonical module of a non-Gorenstein complete Cohen-Macaulay local ring has infinite reducing Gorenstein dimension; see \cite[3.3]{CA}.

It is interesting for us that the research that has been done so far concerning the reducing dimensions has not yet produced an example of a local ring whose residue field has infinite reducing projective dimension. Therefore, based on the examples and results we are aware of about these dimensions, it seems reasonable to us to ask the following:

\begin{que} \label{soruu} Does the residue field of each local ring have finite reducing projective dimension?
\end{que}

The main aim of this paper is to give support and obtain an affirmative answer for Question \ref{soruu} for a large class of local rings. More precisely, we prove:

\begin{thm} \label{mainthm} Let $(S, \fn)$ be a local ring such that the $\fn$-adic completion $\widehat S$ of $S$ equals $R/\underline{x}R$, where $(R, \fm)$ is a Cohen-Macaulay local ring of minimal multiplicity and $\{\underline{x} \}\subseteq  \fm$ is an $R$-regular sequence. If $k$ is infinite, then it has finite reducing projective dimension as an $S$-module.  \qed
\end{thm}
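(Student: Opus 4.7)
The plan is to pass to the completion of $S$, construct an Artinian quotient with square-zero maximal ideal, handle the residue field over this quotient, and then lift finiteness back through two applications of regular-sequence change-of-rings.

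Since $S \to \widehat{S}$ is faithfully flat with the same residue field, and reducing projective dimension is expected to be preserved under faithfully flat base change (presumably from an earlier lemma in the paper), I would replace $S$ with its completion and assume $S = R/\underline{x}R$.

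Using that $R$ is Cohen--Macaulay of minimal multiplicity with infinite residue field, choose a minimal reduction $J = (y_1, \ldots, y_d)$ of $\fm$ satisfying $\fm^2 = J\fm$. Then $T := R/J$ is Artinian local with $\fm_T^2 = 0$, so $\Omega_T k = \fm_T$ is a $k$-vector space, say $\fm_T \cong k^n$. The canonical short exact sequence
\[
0 \to k^n \to T \to k \to 0
\]
realizes a one-step reducing system from $k$ to the free module $T$, of projective dimension $0$, so $\rpdim_T k \le 1$. To transfer this to $S$, observe that both $T$ and $S$ are regular-sequence quotients of $R$, and apply a change-of-rings lemma for reducing projective dimension twice: first through the $R$-regular sequence $J$ to obtain $\rpdim_R k < \infty$ from $\rpdim_T k < \infty$, then through the $R$-regular sequence $\underline{x}$ (using $S = R/\underline{x}R$) to conclude $\rpdim_S k < \infty$.

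The main obstacle is the change-of-rings lemma, in particular its ``descent'' direction from $R$ to $R/\underline{x}R$: one must transfer a reducing system for $k$ over $R$ into one over $S = R/\underline{x}R$, exploiting that $\underline{x}$ is $R$-regular and annihilates $k$. This is delicate because reducing projective dimension does not automatically behave well under regular-sequence quotients, in contrast with Gorenstein dimension; indeed, classical projective dimension itself fails this descent. I expect this lemma to be the central technical ingredient of the proof, while the rest of the argument is a conceptually direct application of the minimal multiplicity structure.
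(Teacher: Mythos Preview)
Your proposal is essentially the paper's proof: pass to the completion, use minimal multiplicity to produce the short exact sequence $0 \to k^{\oplus a} \to R/\mathfrak{q} \to k \to 0$, and then invoke the descent lemma along the regular sequence $\underline{x}$ (your correctly identified ``main obstacle'' is exactly the paper's Proposition~\ref{p2}). One simplification: your intermediate step of reading that exact sequence over $T = R/J$ and then ``ascending'' to $R$ is unnecessary---since $\pd_R(R/\mathfrak{q}) < \infty$, the very same sequence is already a reducing $\pd$-sequence over $R$, giving $\rpdim_R(k) \le 1$ directly (this is the content of~\ref{mincor}), and the paper then applies Corollary~\ref{nzdsec1} and Proposition~\ref{p2} to obtain $\rpdim_{\widehat S}(k) \le \rpdim_R(\Omega^n_R k) + n \le \rpdim_R(k) + n < \infty$.
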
   

We give a proof of Theorem \ref{mainthm} in section 3, where we also prove several preliminary results and study fundamantal properties of reducing dimensions, especially under local homomorphisms of local rings; see Theorem \ref{nzdse} and Remark \ref{nzdseremark}. It is worth noting that, if a module over a local ring as in Theorem \ref{mainthm} has finite Gorenstein dimension, then it does not need to have finite projective dimension; see Example \ref{exendo}. An interesting property of these rings is that they do not admit such modules as long as reducing dimensions are considered. For example, if $S$ is a non-Gorenstein complete local ring as in Theorem \ref{mainthm}, then we prove in Proposition \ref{endo} that each (finitely generated) $S$-module has finite reducing Gorenstein dimension if and only if it has finite reducing projective dimension. 

Another focus of this paper is to give new and nontrivial examples of modules of finite reducing projective dimension; we obtain such examples in section 2. One of these examples deserves to be mentioned here: we construct an example of a local ring where the reducing projective dimension of its residue field is two, and we determine a reducing projective dimension sequence of that residue field explicitly; see Example \ref{maineg}. 

The last section of this paper is devoted to the (involved) proof of Proposition \ref{213}; this proposition plays an important role in the proof of Theorem \ref{mainthm}. It also shows that, like the classical homological dimensions, the reducing dimension of a module is not affected by taking direct sum with a free module.

%We give a proof of Theorem \ref{mainthm} in section 3, where we first prove several preliminary results and study fundamantal properties of reducing dimensions, especially along local ring homomorphisms; see Theorem \ref{nzdse} and Remark \ref{nzdseremark}. 
%The theorem yields an extension of the fact established in \ref{mincor} and gives an affirmative answer to Question \ref{soru} for a large class of rings.
%The proof of Theorem \ref{mainthm} is given in the next section after some preparation. 
%Besides establishing the conclusion of Theorem \ref{mainthm}, mention Proposition \ref{endo}

%motivation to be added:we are not aware of an example of a ring where $\rpdim_R(k)=\infty$. Also motivation from the quasi-projective dimension: the quasi-projective dimension of the residue field is always finite.. 

%Study along local ring map, A focus of this paper is to give new examples of modules of finite reducing dimension.

\section{Preliminaries, examples, and remarks}

In this section we give new examples of modules that have finite reducing, but infinite, projective dimension; see Examples \ref{maineg}, \ref{e1} and \ref{e2}. We also record several important facts that are needed for our argument; see \ref{obs}. We start by recalling the definition of reducing invariant of modules.

In the following, $\Hdim$ denotes a \emph{homological invariant} of $R$-modules, that is, $\Hdim$ denotes a function from the set of isomorphism classes of $R$-modules to the set $\bZ \cup \{ \rpm \infty \}$. Classical examples of such an invariant we use in this paper are the projective dimension $\Hdim=\pd$, and the Gorenstein dimension $\Hdim=\Gdim$.% of $R$-modules.

\begin{dfn} \label{rdim} (\cite[2.5]{AT}; see also \cite[2.1]{CA}) Let $R$ be a local ring, $M$ be an $R$-module, and let $\Hdim$ denote a \emph{homological invariant} of $R$-modules.%, for example, $\Hdim=\pd$, the projective dimension.%, $\Hdim=\CIdim$, or $\Hdim=\Gdim$.

We write $\rHdim_R(M)<\infty$ provided that there exist integers $r\geq 1$, $a_i\geq 1$, $b_i\geq 1$, $n_i\geq 0$, and short exact sequences of $R$-modules of the form 
\begin{equation}\tag{\ref{rdim}.1}
0 \to K_{i-1}^{{\oplus a_i}} \to K_{i} \to \Omega_R^{n_i}K_{i-1}^{{\oplus b_i}} \to 0,
\end{equation}
for each $i=1, \ldots r$, where $K_0=M$ and $\Hdim_R(K_r)<\infty$.  If a sequence as in (\ref{rdim}.1) exists, then we call $\{K_0,  \ldots, K_r\}$ a \emph{reducing $\Hdim$-sequence} of $M$. 

%The \emph{reducing invariant} $\rHdim$ of $M$ is defined as follows:
We set that the \emph{reducing invariant} $\rHdim_R(M)$ of $M$ is zero if and only if $\Hdim_R(M)<\infty$. Moreover, if $\Hdim_R(M)=\infty$, then we define:\begin{equation}\notag{}
\rHdim_R(M)=\inf\{ r\in \bN: \text{there is a reducing $\Hdim$-sequence }  K_0,  \ldots, K_r \text{ of }  M\}. %\cup \{0\}
\end{equation}
Therefore $0\leq \rHdim_R(M)\leq \infty$ for each $R$-module $M$.
%We set $\rHdim_R(M)=0$ if and only if $\Hdim_R(M)<\infty$.% and $\Hdim_R(0)=-\infty=\rHdim_R(0)$.
\end{dfn}

We will give various examples of modules that have finite reducing homological dimension, but first we record several remarks. The first one, which follows directly from Definition \ref{rdim}, is used throughout the paper, for example, in the proofs of Proposition \ref{p1}, Proposition \ref{p2}, and Theorem \ref{nzdse}.

\begin{rem} \label{WH} Let $R$ be a local ring, $N$ be an $R$-module, and let $\Hdim$ denote a \emph{homological invariant} of $R$-modules. 
\begin{enumerate}[\rm(i)]
\item If $1\leq \rHdim_R(N)=r<\infty$, then there is an exact sequence $0\to N^{\oplus a} \to K \to \syz^n_R N^{\oplus b}\to 0$, where $a\geq 1$, $b\geq 1$, and $n\geq 0$ are integers, and $K$ is an $R$-module such that $\rHdim_R(K)=r-1$.
\item If $0\to N^{\oplus a} \to K \to \syz^n_RN^{\oplus b}\to 0$ is an exact sequence of $R$-modules, where $a\geq 1$, $b\geq 1$, and $n\geq 0$ are integers, then $\rHdim_R(N)\leq \rHdim_R(K)+1$. \qed
\end{enumerate}
\end{rem}

\begin{rem} \label{diff} The definition of the reducing dimension we use in this paper is taken from \cite{AT}, even though this notion was originally defined in \cite{CA}. The difference between the definitions of the reducing dimension given \cite{CA} and \cite{AT} is that \cite{AT} only requires the integers $n_i$ in Definition \ref{rdim} to be nonnegative, while \cite{CA} requires these numbers to be positive; see also Remark \ref{difference}. Hence, if a module has finite reducing invariant with respect to the definition given in \cite{CA}, then it has also finite reducing invariant with respect to the definition we use in this paper. \qed
\end{rem}

\begin{rem} The definition of reducing projective dimension \cite{CA} was initially originated from reducible complexity, a notion introduced by Bergh; see  \cite{Be, Berghredcx2} for the details. In general, modules having reducible complexity also have finite reducing projective dimension, but not vice versa. In other words, reducing projective dimension is a finer invariant than reducible complexity. One way to observe this fact is to use Theorem \ref{mainthm}: if $S$ is as in Theorem \ref{mainthm} and $S$ is not a complete intersection, then $k$ does not have finite complexity as an $S$-module \cite[2.3]{Gul}, and hence it does not have reducible complexity as an $S$-module \cite[2.1]{Be}, but $k$ has finite reducing projective dimension as an $S$-module by Theorem \ref{mainthm}. \qed
\end{rem}

One of the motivations of our work in this paper comes from the fact that the reducing projective of the residue field is finite over local rings with radical square zero:

\begin{eg}\label{egAC} Let $R$ be a local ring. Assume $R$ is not a field and $\fm^2 = 0$. Then it follows that $\pd_R(\Omega_R^ik)=\infty=\Gdim_R(\Omega_R^ik)$ and $\rpdim_R(\Omega_R^ik)=1=\rGdim_R(\Omega_R^ik)$ for all $i\geq 0$; see \cite[2.3]{CA}.
\end{eg}

\begin{eg} Let $R=\comp[\![x,y]\!]/(x^2,xy, y^2)$. Then  $\fm^2 = 0$ so that \ref{egAC} implies $\rpdim_R(k)=1$.\qed
\end{eg}

The ring in Example \ref{egAC} is zero-dimensional. Next is an example of a module -- over a local ring of dimension two -- that has finite reducing projective, but infinite projective, dimension.

\begin{eg} (\cite[2.7]{CA}) Let $R=\comp[\![x^3, x^2y, xy^2, y^3 ]\!]$ be the $3$rd Veronese subring of the ring $\comp[\![x, y]\!]$ and let $M=(x^2,xy, y^2)$. Then it follows that $\rpdim_R(M)=\rGdim_R(M)=1<\infty=\Gdim_R(M)$. In fact, one can check that there is an exact sequence of $R$-modules $0 \to M^{\oplus 2}  \to R^{\oplus 3} \to M \to 0$ so that $\{M ,R^{\oplus 3}\}$ gives a reducing $\pd$-sequence of $M$. \qed
\end{eg}

In general, unlike the classical homological dimensions, the reducing projective dimension of a nonzero module over a local ring, if finite, may not be bounded by the depth of the ring in question; see, for example, Example \ref{egAC}. In fact, we do not know whether or not there is a uniform upper bound for reducing projective dimension of modules, even over Gorenstein rings. On the other hand, a consequence of what we know from \cite{CDKM} yields a satisfactory answer over complete intersection rings, and also allows us to determine modules that have sufficiently large finite reducing projective dimension:

\begin{chunk} \label{s1} Let $R$ be a local complete intersection ring of codimension $c$, for example, one can pick the ring $R=k[\![x_1, \ldots, x_c, y_1, \ldots, y_c]\!]/(x_1y_1, \ldots, x_cy_c)$. If $M$ is an $R$-module, then it follows that $\rpdim_R(M)\leq c$ since $\rpdim_R(M)=\cx_R(M)$, where $\cx_R(M)$ is the complexity of $M$; see \cite{CDKM} for the details. So, for each given $r$ with $0\leq r \leq c$, one can use \cite[5.7]{AGP} and construct an $R$-module $M$ such that $\rpdim_R(M)=r$. \qed

%Moreover, if $R$ is Gorenstein, then $R$ is a hypersurface (that is, a complete intersection ring of codimension one) if and only if $\rpdim_R(k)=1$; see also Example \ref{egAC}.
\end{chunk}

We proceed to give an example of a module that has reducing projective dimension equal two and subsequently determine a reducing sequence of it explicitly; see Example \ref{maineg}. First we record some auxiliary results that are used for the argument of the example, as well as in section 3.

\begin{chunk}\label{obs} Let $(R,\m)\to (S,\n)$ be a homomorphism of local rings.%
\begin{enumerate}[\rm(i)]
\item If $M$ is an $R$-module and $\Tor^R_{i}(M,S)=0$ for each $i\geq 1$, then it follows that $\pd_S(M\otimes_R S)=\pd_R(M)$ and $S\otimes_R \syz^n_R M\cong \syz^n_S(M\otimes_R S)$ for all $n\ge 0$; see  \cite[1.2.3]{Av2}.   
\item Assume $S$ has finite flat dimension over $R$. Let $M$ be an $R$-modules such that $\Gdim_R(M)=n<\infty$ and $\Tor^R_{i}(M,S)=0$ for all $i=1, \ldots, n$. Then \cite[5.4.4]{Gdimbook} implies that $\Tor^R_{i}(M,S)=0$ for all $i\geq 1$. Note that $M$ is reflexive as an $R$-complex; see, for example,  \cite[2.7]{Yass}. Therefore, we conclude from \cite[5.10]{Larsthesis} that $\Gdim_S(M\otimes_R S)=n$. 
\item If $S$ is a finitely generated $R$-module such that $\pd_R(S)=n<\infty$ and $N$ is a finitely generated $S$-module, then there is some free $R$-module $F$ such that $\syz^{ a}_R\syz^{ b}_S N\cong F \oplus \syz^{ a+b}_RN$ 
for each $a\ge n$ and $b\ge 0$; this fact extends \cite[4.2]{TN} and can be proved similarly. \qed
\end{enumerate}
\end{chunk}   

The next observation is used for Example \ref{maineg} and also in the proof of Proposition \ref{p2}.

\begin{chunk} \label{ara} Let $R$ be a local ring and let $M$ be a nonzero $R$-module. Assume there is a non zero-divisor $x\in \fm$ on $R$ such that $xM=0$. Then there is a short exact sequence of $R/xR$-modules:
\begin{equation}\tag{\ref{ara}.1}
0 \to M \to \Omega_R M \otimes_R (R/xR) \to \Omega_{R/xR}(M) \to 0. 
\end{equation}  

To establish the sequence in (\ref{ara}.1), we consider a syzygy sequence of $M$, namely an exact sequence $0 \to \Omega_R M \to F \to M \to 0$, where $F$ is a free $R$-module. Then we obtain, by tensoring the syzygy sequence with $R/xR$, an exact sequence of $R$-modules of the form
\begin{equation}\tag{\ref{ara}.2}
0 \to \Tor_1^R(M,R/xR) \to \Omega_R M \otimes_R (R/xR) \to F\otimes_R(R/xR) \to M/xM \to 0.
\end{equation}
Note that $\Tor_1^R(M,R/xR) \cong M$ since $x$ is a non zero-divisor on $R$ and $xM=0$. Hence (\ref{ara}.2) yields the exact sequence in (\ref{ara}.1). \qed
\end{chunk}

\begin{eg} \label{maineg} Let $R=S/aS$, where $(S, \fn)$ is a one-dimensional local hypersurface ring and $a\in \fn^2$ is a nonzero divisor on $S$. For example, we can set $R=\comp[\![x,y]\!]/(x^2, y^2)$, $S=\comp[\![x,y]\!]/(x^2)$, and $a=y^2$. Then $R$ is an Artinian complete intersection ring of codimension two so that $\rpdim_R(k)=2$; see \ref{s1}. Now we proceed and find a reducing $\pd$-sequence of $k$ over $R$. 

Let us first note, by making use of (\ref{ara}.1) with the $S$-module $k$ over $S$, we have a short exact sequence of $R$-modules of the form:
\begin{equation}\tag{\ref{maineg}.1}
0 \to k \to R \otimes_{S} \fn  \to \Omega_R k \to 0.
\end{equation}
As we know $\rpdim_R(k)=2$, it follows that $\rpdim_R(R \otimes_{S} \fn)=1$; see Remark \ref{WH}(ii). So it is enough to find a reducing $\pd$-sequence of  $R \otimes_{S} \fn$ over $R$.

In general, if $T$ is a local ring and $X$ and $Y$ are (finitely generated) $T$-modules such that $\pd_T(X)=1$ and $Y$ is torsion-free, then $\Tor_i^T(X,Y)=0$ for al $i\geq 1$; see, for example, \cite[2.7]{CET}. So, as $\pd_S(R)=1$, and $\fn$ and $\Omega_S \fn$ are torsion-free $S$-modules, we have:
\begin{equation}\tag{\ref{maineg}.2}
\Tor_i^S(\Omega_S \fn,R)=0=\Tor_i^S(\fn,R) \text{ for all } i\geq 1.
\end{equation}

Note that, as $S$ is a one-dimensional hypersurface ring and $\fn$ is a maximal Cohen-Macaulay $S$-module with no free summand, we have $\fn \cong \Omega^2_S \fn$. Moreover, since $\fn$ is generated by two elements, there is a short exact sequence of $S$-modules of the form:
\begin{equation}\tag{\ref{maineg}.3}
0 \to \fn \to S^{\oplus 2} \to \Omega_S \fn\to 0.
\end{equation}
We obtain, by using (\ref{maineg}.2) and tensoring (\ref{maineg}.3) with $R$ over $S$, the exact sequence of $R$-modules:
\begin{equation}\tag{\ref{maineg}.4}
0 \to   R \otimes_S \fn  \to R^{\oplus 2} \to R \otimes_S \Omega_S \fn  \to 0.
\end{equation}

Next note that $R\otimes_S\Omega_S \fn  \cong \Omega_R (\fn \otimes_S R)$; see \ref{obs}(i) and (\ref{maineg}.2). Therefore, (\ref{maineg}.1) and (\ref{maineg}.4) show that a reducing $\pd$-sequence of $k$ over $R$ is $\{k, R \otimes_S  \fn, R^{\oplus 2} \}$. \qed
\end{eg} 

Recall that an $\fm$-primary ideal $I$ of a local ring $R$ is called \emph{Ulrich} if  $I/I^2$ is a free $R/I$-module and $I^2 = \mathfrak{q}I$ for some parameter ideal $\mathfrak{q}$ of $R$, where $\mathfrak{q}$ is a reduction of $I$; see \cite[1.1]{GotoUlrich} for the details. Next we point out that Ulrich ideals, when exist, provide nontrivial examples of modules of finite reducing projective dimension.

\begin{chunk}\label{min} Let $R$ be a Cohen-Macaulay local ring and let $I$ be an Ulrich ideal of $R$ that is not a parameter ideal. Then it follows that $\pd_R(R/I)=\infty$ and  $\rpdim_R(R/I) = 1$. 

To observe this, note,  if $\rpdim_R(R/I) = 0$, that is, $\pd_R(R/I)<\infty$, then, as $I/I^2$ is a free $R/I$-module, and so \cite[2.2.8]{BH} implies that $I$ is a parameter ideal. Hence we conclude $\pd_R(R/I)=\infty$. Next we consider a parameter ideal $\mathfrak{q}$ of $R$ such that $I^2 = \mathfrak{q}I$ and $\mathfrak{q} \subsetneqq I \subseteq \fm$. Then, by \cite[2.3]{GotoUlrich}(2)(c), it follows that $I/\mathfrak{q}$ is a free $R/I$-module. So there is an exact sequence of $R$-modules $0 \to (R/I)^{\oplus a} \to R/\mathfrak{q} \to R/I \to 0$ for some $a\geq 1$, which shows that $\rpdim_R(R/I) \le 1$, that is, $\rpdim_R(R/I) = 1$.
\end{chunk}

\begin{rem} \label{difference} It is worth mentioning that, if we adopt the definition of the reducing dimension given in \cite{CA}, then the argument of \ref{min} does not necessarily yield that $\rpdim_R(R/I) = 1$.
\end{rem}

\begin{eg} \label{e1} Let $R= \comp[\![x,y,z]\!]/(x^3-y^2, z^2-x^2y)$ and let $I=(x,y)$. Then $R/I\cong \comp[\![z]\!]/(z^2)$ and $I$ is an Ulrich ideal of $R$ which is not a parameter ideal; see \cite[2.7(1)]{GotoUlrich}. Therefore, by \ref{min}, it follows that $\pd_R(R/I)=\infty$ and  $\rpdim_R(R/I) = 1$; see also \cite[2.7, 6.7 and 6.8]{GotoUlrich} for similar examples. \qed
\end{eg}

Recall that a Cohen-Macaulay local ring $R$ is said to have \emph{minimal multiplicity} if the codimension of $R$ is one less than the multiplicity of $R$; see \cite[4.5.14]{BH}.  In the following we obtain an extension of the fact recorded in Example \ref{egAC}:

\begin{chunk} \label{mincor} Let $R$ be a non-regular Cohen-Macaulay local ring such that $R$ has minimal multiplicity and $|k|=\infty$. Then it follows that $\fm^2=\mathfrak{q}\fm$ for some parameter ideal $\mathfrak{q}$ that is a minimal reduction of $\fm$; see, for example, \cite{Sally}. Then it follows that $\fm$ is an Ulrich ideal of $R$ that is not a parameter ideal. Therefore \ref{min} yields that $\pd_R(k)=\infty$ and $\rpdim_R(k) = 1$.  
\end{chunk}

\begin{eg} \label{e2} Let $R=\comp[\![x,y]\!]/(x,y)^2$, or $R=\comp[\![t^3, t^4, t^5]\!]$, or $R=\comp[\![t^4, t^5, t^6, t^7]\!]$. Then $R$ is a non-regular Cohen-Macaulay local ring with minimal multiplicity so that \ref{mincor} implies that $\pd_R(k)=\infty$ and $\rpdim_R(k) = 1$. \qed
\end{eg}

\begin{rem} We now have several examples of modules that have finite reducing projective dimension. Let us also mention that modules of infinite reducing projective dimension also exist: Jorgensen and {\c{S}}ega \cite[1.7]{JS} constructed a local Artinian ring $T$ and a $T$-module $X$ of infinite Gorenstein dimension such that $\Ext^i_T(X,T) = 0$ for all $i\geq 1$. On the other hand, Araya and Celikbas \cite[1.3]{CA} proved that, if $R$ is a local ring and $M$ is an $R$-module such that $\rGdim_R(M)<\infty$ and $\Ext^i_R(M,R) = 0$ for all $i\gg 0$, then $M$ has finite Gorenstein dimension. Therefore the aforementioned module $X$ constructed by Jorgensen and {\c{S}}ega has infinite reducing Gorenstein dimension, and hence has infinite reducing projective dimension, over the ring $T$. \qed
\end{rem}

We prove Theorem \ref{mainthm} in the next section; we should note that Theorem \ref{mainthm}, besides giving an affirmative answer for Question \ref{soruu} for a large class of  local rings, also yields an extension of the observation stated in 2.12.%%%%%%%%%%%%%%%%%%%%%%%%%%%%%%%%%%%%%%%%%%%%%%%%%%%%%%%%%%%%%%%%%%%%%%%%%%%%%%%%%

\section{Proof of the main theorem} 

In this section we give a proof of Theorem \ref{mainthm}; see the paragraph following Proposition \ref{p2}. Along the way we obtain several new results and study further properties of reducing dimensions, especially properties under local homomorphisms of local rings.

Throughout, given a ring $R$, $\Hdim$ denotes a homological invariant of $R$-modules; see Definition \ref{rdim}. Our first result yields a generalization of \cite[2.9]{ACCK}. 

\begin{prop} \label{p1} Let $(R,\m)\to (S,\n)$ be a local homomorphism of local rings. Assume one has that $\Hdim_S(M\otimes_R S)\le \Hdim_R(M)$ for each $R$-module $M$ for which $\Tor^R_{i}(M,S)=0$ for all $i\geq 1$. If $N$ is an $R$-module such that $\Tor^R_{i}(N,S)=0$ for all $i\geq 1$, then $\rHdim_S(N\otimes_R S)\leq \rHdim_R(N)$.
\end{prop}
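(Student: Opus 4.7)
The plan is to proceed by induction on $r=\rHdim_R(N)$, using the two halves of Remark \ref{WH} as the inductive mechanism and the hypothesis of the proposition for the base case.

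For the base case $r=0$, by definition $\Hdim_R(N)<\infty$, and since $\Tor^R_i(N,S)=0$ for all $i\geq 1$ the standing assumption gives $\Hdim_S(N\otimes_R S)\le \Hdim_R(N)<\infty$, so $\rHdim_S(N\otimes_R S)=0$.

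For the inductive step, assume the conclusion holds for all $R$-modules $N'$ satisfying the Tor-vanishing condition whose reducing invariant is at most $r-1$. Suppose $\rHdim_R(N)=r\ge 1$. By Remark \ref{WH}(i) there is a short exact sequence
\begin{equation*}
0\to N^{\oplus a}\to K\to \syz_R^n N^{\oplus b}\to 0
\end{equation*}
with $a,b\ge 1$, $n\ge 0$, and $\rHdim_R(K)=r-1$. The first step is to propagate the Tor-vanishing hypothesis to both $\syz_R^n N$ and $K$. For $\syz_R^n N$, the short exact sequences $0\to \syz_R^{j+1} N\to F_j\to \syz_R^j N\to 0$ coming from a free resolution give, via the long exact Tor sequence, the dimension shift $\Tor^R_i(\syz_R^{j+1}N,S)\cong \Tor^R_{i+1}(\syz_R^j N,S)$ for $i\ge 1$. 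A finite iteration yields $\Tor^R_i(\syz_R^n N,S)=0$ for all $i\ge 1$. Then the Tor long exact sequence for $0\to N^{\oplus a}\to K\to \syz_R^n N^{\oplus b}\to 0$ sandwiches $\Tor^R_i(K,S)$ between vanishing terms, so $\Tor^R_i(K,S)=0$ for all $i\ge 1$ as well.

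With these vanishings in hand, tensoring the displayed sequence with $S$ over $R$ remains exact, and $S\otimes_R \syz_R^n N\cong \syz_S^n(N\otimes_R S)$ by \ref{obs}(i). We thus obtain an $S$-module sequence
\begin{equation*}
0\to (N\otimes_R S)^{\oplus a}\to K\otimes_R S\to \syz_S^n(N\otimes_R S)^{\oplus b}\to 0.
\end{equation*}
The inductive hypothesis applies to $K$ (which satisfies the Tor-vanishing condition and has $\rHdim_R(K)=r-1$), giving $\rHdim_S(K\otimes_R S)\le r-1$. Finally, Remark \ref{WH}(ii) applied to the displayed $S$-module sequence yields
\begin{equation*}
\rHdim_S(N\otimes_R S)\le \rHdim_S(K\otimes_R S)+1\le r=\rHdim_R(N),
\end{equation*}
completing the induction.

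The only mildly delicate point is the bookkeeping of Tor-vanishing along the induction: one must confirm that both the syzygy $\syz_R^n N$ and the middle module $K$ inherit the hypothesis $\Tor^R_i(-,S)=0$ from $N$, so that (a) the exactness of the sequence is preserved upon tensoring with $S$, (b) the identification $S\otimes_R\syz_R^n N\cong \syz_S^n(N\otimes_R S)$ is legitimate via \ref{obs}(i), and (c) the inductive hypothesis can be invoked for $K$. Everything else is a formal application of the two parts of Remark \ref{WH}.
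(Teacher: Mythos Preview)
Your proof is correct and follows essentially the same approach as the paper: induction on $r=\rHdim_R(N)$, using Remark~\ref{WH}(i) to produce the sequence, propagating Tor-vanishing to $K$, tensoring with $S$ via \ref{obs}(i), and closing with Remark~\ref{WH}(ii). The only difference is cosmetic---you spell out the dimension-shifting argument for $\Tor^R_i(\syz_R^n N,S)=0$ more explicitly than the paper does.
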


\begin{proof} Let $N$ be an $R$-module such that $\rHdim_R(N)=r<\infty$. We proceed by induction on $r$ to prove $\rHdim_S(N\otimes_R S)\leq r$. We may assume $N\otimes_R S\neq 0$.

Assume $r=0$. Then, by Definition \ref{rdim}, we have that $\Hdim_R(N)<\infty$. This implies, by the hypothesis, that $\Hdim_S(N\otimes_R S)\leq  \Hdim_R(N)<\infty$, and hence $\rHdim_S(N\otimes_R S)=0=r$.

Next we assume $r\geq 1$. Then, by Remark \ref{WH}(i), there exists a short exact sequence of $R$-modules $0\to N^{\oplus a} \to K \to \syz^c_R N^{\oplus b}\to 0$, where $\rHdim_R(K)=r-1$, and $a\geq 1$, $b\geq 1$, and $c\geq 0$ are integers. Note, since  $\Tor^R_{i}(N,S)=0$ for all $i\geq 1$, we have the vanishing of $\Tor^R_{i}(K,S)$ for all $i\geq 1$. Hence the induction hypothesis on $r$ yields: 
\begin{equation}\tag{\ref{p1}.1}
\rHdim_S(K\otimes_R S)\leq \rHdim_R(K)= r-1.
\end{equation}
Once again we make use of the hypothesis that $\Tor^R_{i}(N,S)$ vanishes for all $i\geq 1$, tensor the short exact sequence $0\to N^{\oplus a} \to K \to \syz^c_R N^{\oplus b}\to 0$ with $S$ over $R$, and obtain the short exact sequence of $S$-modules $0\to (N\otimes_R S)^{\oplus a} \to K\otimes_R S \to \syz^c_S(N\otimes_R S)^{\oplus b}\to 0 $; see also \ref{obs}(i). So, by Remark \ref{WH}(ii), we deduce:
\begin{equation}\tag{\ref{p1}.2}
\rHdim_S(N\otimes_R S)\le \rHdim_S(K\otimes_R S)+1.
\end{equation}
Consequently (\ref{p1}.1) and (\ref{p1}.2) show that $\rHdim_S(N\otimes_R S)\leq r=\rHdim_R(N)$.
\end{proof}

\begin{cor} \label{p1c} Let $(R,\m)\to (S,\n)$ be a local homomorphism of local rings and let $N$ be an $R$-module such that $\Tor^R_{i}(N,S)=0$ for all $i\geq 1$. Then the following hold:
\begin{enumerate}[\rm(i)]
\item $\rpdim_S(N\otimes_R S)\le \rpdim_R(N)$.
\item If $S$ has finite flat dimension over $R$, then  $\rGdim_S(N\otimes_R S)\le \rGdim_R(N)$.
\end{enumerate}
\end{cor}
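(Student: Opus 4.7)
The plan is to deduce both parts directly from Proposition \ref{p1} by checking, for the two choices $\Hdim=\pd$ and $\Hdim=\Gdim$, that the hypothesis of Proposition \ref{p1} holds, namely that whenever an $R$-module $M$ satisfies $\Tor^R_i(M,S)=0$ for all $i\ge 1$, one has $\Hdim_S(M\otimes_R S)\le \Hdim_R(M)$. Once this is verified, Proposition \ref{p1} applied to $N$ yields the desired inequality, and the corollary drops out for free. So the bulk of the work is to produce these two base inequalities.

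For part (i), take $\Hdim=\pd$. One may assume $\pd_R(M)<\infty$, since otherwise the claimed inequality is vacuous. Under the Tor-vanishing hypothesis, \ref{obs}(i) gives $\pd_S(M\otimes_R S)=\pd_R(M)$. In particular, $\pd_S(M\otimes_R S)\le \pd_R(M)$, which is exactly the hypothesis of Proposition \ref{p1} for the invariant $\pd$. Hence Proposition \ref{p1} furnishes $\rpdim_S(N\otimes_R S)\le \rpdim_R(N)$.

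For part (ii), take $\Hdim=\Gdim$ and assume $S$ has finite flat dimension over $R$. Again one may assume $\Gdim_R(M)=n<\infty$. The Tor-vanishing hypothesis on $M$ says in particular that $\Tor^R_i(M,S)=0$ for $i=1,\dots,n$, which is the input required by \ref{obs}(ii); that observation (via \cite[5.4.4]{Gdimbook} and \cite[5.10]{Larsthesis}) gives $\Gdim_S(M\otimes_R S)=n=\Gdim_R(M)$, so the desired inequality holds. Now Proposition \ref{p1} applied to $N$ yields $\rGdim_S(N\otimes_R S)\le \rGdim_R(N)$.

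There is no genuine obstacle here: the only mild subtlety is making sure that, for part (ii), the auxiliary Tor-vanishing beyond degree $\Gdim_R(M)$ needed to invoke \cite[5.10]{Larsthesis} is actually covered by our global hypothesis $\Tor^R_i(N,S)=0$ for all $i\ge 1$ (which transfers to the intermediate modules produced in the reducing sequence, as already used inside the proof of Proposition \ref{p1}). Beyond that, the corollary is a clean specialization of Proposition \ref{p1}.
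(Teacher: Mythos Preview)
Your proof is correct and follows exactly the same approach as the paper: both parts are deduced from Proposition \ref{p1} by verifying its hypothesis via \ref{obs}(i) for $\Hdim=\pd$ and \ref{obs}(ii) for $\Hdim=\Gdim$. Your final paragraph is unnecessary, since the hypothesis of Proposition \ref{p1} is stated for arbitrary $M$ with $\Tor^R_i(M,S)=0$ for all $i\ge 1$, so the Tor-vanishing you worry about is already assumed; but this does not affect the correctness of your argument.
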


\begin{proof} The first and the second conclusion, in view of Proposition \ref{p1}, follow from \ref{obs}(i) and \ref{obs}(ii), respectively.
\end{proof}

\begin{cor} \label{p1c2} Let $R$ be a local ring and let $x\in \fm$ be a non zero-divisor on $R$. Assume one has that $\Hdim_{R/xR}(N/xN)\leq \Hdim_R(N)$ for each $R$-module $N$ such that $x$ is a non zero-divisor on $N$. If $M$ is an $R$-module and $x$ is a non zero-divisor on $M$, then $\rHdim_{R/xR}(M/xM)\leq \rHdim_R(M)$.
\end{cor}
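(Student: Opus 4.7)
The plan is to deduce this directly from Proposition \ref{p1} applied to the natural surjection $R\to R/xR$, which is a local homomorphism of local rings. The key input needed to invoke Proposition \ref{p1} with $S=R/xR$ is that, for every $R$-module $N$ on which $x$ is a non zero-divisor, one has $\Tor^R_i(N,R/xR)=0$ for all $i\geq 1$. This is the standard computation: the short exact sequence $0\to R\xrightarrow{x}R\to R/xR\to 0$ is a free resolution of $R/xR$, so tensoring with $N$ shows $\Tor^R_i(N,R/xR)=0$ for $i\geq 2$ automatically, and $\Tor^R_1(N,R/xR)=\{n\in N:xn=0\}=0$ by the hypothesis on $x$.

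Next, I would observe that the hypothesis of our corollary,
\[
\Hdim_{R/xR}(N/xN)\le \Hdim_R(N) \quad \text{whenever $x$ is a non zero-divisor on $N$,}
\]
is exactly the hypothesis of Proposition \ref{p1} for the homomorphism $R\to R/xR$, once one notices that $N\otimes_R R/xR\cong N/xN$ and that (by the preceding paragraph) the classes of $R$-modules ``$x$ is a non zero-divisor on $N$'' and ``$\Tor^R_i(N,R/xR)=0$ for all $i\geq 1$'' coincide.

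Thus, given $M$ with $x$ a non zero-divisor on $M$, the Tor-vanishing condition of Proposition \ref{p1} is met for $M$, and the conclusion gives
\[
\rHdim_{R/xR}(M/xM)=\rHdim_{R/xR}(M\otimes_R R/xR)\le \rHdim_R(M),
\]
which is what we want. There is no genuine obstacle: the whole argument is a matching of hypotheses between the corollary and Proposition \ref{p1}. The only point that deserves care is verifying that the $\Tor$-vanishing is equivalent to $x$ being a non zero-divisor, so that the hypothesis we are given on $\Hdim$ really does apply to every module produced during the inductive unraveling in the proof of Proposition \ref{p1}.
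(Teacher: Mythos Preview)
Your proposal is correct and follows essentially the same approach as the paper: the paper's proof is the single line ``The result follows from Proposition \ref{p1} by using the ring homomorphism $R \to S=R/xR$,'' and you have simply spelled out the verification that the hypotheses match (in particular, that $x$ being a non zero-divisor on $N$ is equivalent to $\Tor^R_i(N,R/xR)=0$ for all $i\ge 1$).
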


\begin{proof} The result follows from Proposition \ref{p1} by using the ring homomorphism $R \to S=R/xR$.
\end{proof}

The following special case of Corollary \ref{p1c2} is worth recording separately:

\begin{cor} \label{p1c3} Let $R$ be a local ring and let $M$ be an $R$-module. If $x \in \fm$ is a non zero-divisor on $R$ and $M$, then $\rpdim_{R/xR}(M/xM)\leq \rpdim_R(M)$ and $\rGdim_{R/xR}(M/xM)\leq \rGdim_R(M)$. \qed
\end{cor}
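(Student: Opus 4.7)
The plan is to deduce Corollary \ref{p1c3} as an immediate specialization of Corollary \ref{p1c2}, taking $\Hdim$ to be $\pd$ and then $\Gdim$. To do this I only need to verify the hypothesis of Corollary \ref{p1c2} for these two invariants: that for every $R$-module $N$ on which $x$ is a non zero-divisor, one has $\pd_{R/xR}(N/xN)\le \pd_R(N)$ and $\Gdim_{R/xR}(N/xN)\le \Gdim_R(N)$.

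For the projective dimension case, this is a standard change-of-rings fact: since $x$ is $R$-regular and $N$-regular, the Koszul complex on $x$ shows $\Tor_i^R(N,R/xR)=0$ for all $i\ge 1$, and hence by the flat base change part of \ref{obs}(i) we have $\pd_{R/xR}(N/xN)=\pd_R(N)$. In particular the desired inequality holds. For the Gorenstein dimension case, the same Tor-vanishing together with the statement in \ref{obs}(ii) (applied with $S=R/xR$, which has flat dimension one over $R$) yields $\Gdim_{R/xR}(N/xN)=\Gdim_R(N)$ whenever $\Gdim_R(N)$ is finite; when $\Gdim_R(N)=\infty$ the inequality is vacuous. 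Both hypotheses of Corollary \ref{p1c2} are therefore satisfied.

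With these verified, applying Corollary \ref{p1c2} twice—once with $\Hdim=\pd$ and once with $\Hdim=\Gdim$—directly yields the two inequalities in the statement. I expect no real obstacle here: the entire content is the change-of-rings collapse of the hypothesis of Corollary \ref{p1c2}, and the corollary then does the work of passing from classical to reducing versions. The only mild subtlety is the $\Gdim$ case, where one should be careful that the hypothesis on $N$ in \ref{obs}(ii)—namely finite Gorenstein dimension—is precisely what lets us invoke the change-of-rings equality, and that the case $\Gdim_R(N)=\infty$ is handled trivially.
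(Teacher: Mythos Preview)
Your proposal is correct and follows exactly the paper's intended route: the paper introduces Corollary~\ref{p1c3} as ``the following special case of Corollary~\ref{p1c2}'' and gives no proof, so spelling out the hypothesis check via \ref{obs}(i) and \ref{obs}(ii) is precisely what is implicit. One small terminological slip: calling the use of \ref{obs}(i) ``flat base change'' is inaccurate since $R\to R/xR$ is not flat; what you are actually using is the Tor-vanishing hypothesis of \ref{obs}(i), which you correctly verified.
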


Next we point out that the inequality in Corollary \ref{p1c}(ii) can be strict; we give an example of a local homomorphism of local rings $R \to S$ and an $R$-module $N$ such that $S$ has finite flat dimension over $R$, $\Tor^R_{i}(N,S)=0$ for all $i\geq 1$, and $\rGdim_S(N\otimes_R S)<\infty=\rGdim_R(N)$.  

\begin{eg} \label{newex} Let $R=k[\![t^3, t^7, t^8]\!]$ and let $N=R+Rt+Rt^5$. Then $R$ is a one-dimensional domain and $N$ is a maximal Cohen-Macaulay $R$-module. 

Let $S=R/\q$, where $\q=(t^3)$. Then $R\cong k[\![x,y,z]\!]/(x^5-yz, y^2-zx^2, z^2-x^3y)$ and $S \cong k[\![y,z]\!]/(y,z)^2$. Moreover, we have that $\q N=Rt^3+Rt^4+Rt^8=\m N$ and hence $N/\q N \cong k^{\oplus 3}$. Therefore it follows that $\rpdim_{S}(N\otimes_RS)=1=\rGdim_{S}(N\otimes_RS)$; see Example \ref{egAC}.

Note that $N$ is reflexive if and only if $N=(R:_{Q(R)}(R:_{Q(R)}N))$; see, for example, \cite[2.4]{KoTa}. One can check
the equalities $\big(R:_{Q(R)}(R:_{Q(R)}N)\big)=\big(R:_{Q(R)}(Rt^6+Rt^7+Rt^8)\big)=R+Rt+Rt^2$ hold and conclude that $N$ is not reflexive.

As $R$ is a one-dimensional domain, an $R$-module is reflexive if and only if it is the syzygy of a maximal Cohen-Macaulay $R$-module; see \cite[2.4]{IW}. Hence $N$ is not the syzygy of a maximal Cohen-Macaulay $R$-module. It is proved in \cite{CDKM} that, if a maximal Cohen-Macaulay module over a local ring has finite reducing Gorenstein dimension, then it a syzygy of a maximal Cohen-Macaulay module. Consequently, we conclude that $\rGdim_R(N)=\infty$. Finally we note, since $\pd_R(S)=1$ and $t^3$ is a non zero-divisor on $N$, that $\Tor^R_{i}(N,S)=0$ for all $i\geq 1$. \qed
\end{eg}

\begin{chunk} \label{setup} Given a local ring $R$, we consider the following conditions for the invariant  $\Hdim$ in question:
\begin{enumerate}[\rm(i)]
\item $\Hdim_R(M)\le \pd_R(M)$ for each $R$-module $M$. 
\item $\Hdim_{R}(\syz_{R}{M})\le \Hdim_{R}(M)$ for each $R$-module $M$.  
\item $\Hdim_R(M)\leq \Hdim_R(N)$ if $0 \to F \to N \to M \to 0$ is an exact sequence of $R$-modules, where $F$ is free.
%\item If $(R,\m)\to (S,\n)$ is a local ring homomorphism, where $S$ is a finitely generated $R$-module such that $\pd_R(S)<\infty$, and $N$ is a (finitely generated) $S$-module such that $\Hdim_S(N)<\infty$, then it follows that $\Hdim_R(N)<\infty$.
\end{enumerate}
\end{chunk}

%\todo[inline]{in \ref{conc}, we neeed to add references for some of these}

\begin{chunk} \label{conc} If $\Hdim$ equals the projective dimension $\pd$, or the Gorenstein dimension $\Gdim$, then it satisfies all the conditions stated in \ref{setup}; see, for example, \cite[1.2.9, 1.2.10]{Gdimbook} for the case where $\Hdim=\Gdim$. \qed
\end{chunk}

We need the following result for the proof of Theorem \ref{nzdse}.%, and hence for that of Theorem \ref{mainthm}.

\begin{prop} \label{213} Let $R$ be a local ring and let $\Hdim_R$ be a homological invariant of $R$-modules which satisfies the condition (iii) of \ref{setup}. Then it follows that $\rHdim_R(M\oplus F)=\rHdim_R(M)$ for each $R$-module $M$ and each free $R$-module $F$. \qed
\end{prop}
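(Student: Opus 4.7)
The plan is to prove both inequalities $\rHdim_R(M) \leq \rHdim_R(M \oplus F)$ and $\rHdim_R(M \oplus F) \leq \rHdim_R(M)$ separately, using condition (iii) as the main tool. For the first inequality, I would establish the following auxiliary lemma: if $0 \to F' \to N \to M' \to 0$ is a short exact sequence of $R$-modules with $F'$ free, then $\rHdim_R(M') \leq \rHdim_R(N)$. Applying this lemma to the split sequence $0 \to F \to M \oplus F \to M \to 0$ gives the desired inequality.

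The auxiliary lemma is proved by induction on $s = \rHdim_R(N)$. The base case $s = 0$ is immediate from condition (iii) of \ref{setup}, which yields $\Hdim_R(M') \leq \Hdim_R(N) < \infty$. For $s \geq 1$, take the first reducing step $0 \to N^{\oplus a} \to K \to \Omega_R^{n} N^{\oplus b} \to 0$ of $N$ with $\rHdim_R(K) = s-1$. By the Horseshoe lemma applied to $0 \to F' \to N \to M' \to 0$ (using a minimal free resolution of $M'$ and the trivial resolution of $F'$), one identifies $\Omega_R^{n} N$ with $\Omega_R^{n} M'$ up to free summand for $n \geq 1$. Quotienting $K$ by the free submodule $(F')^{\oplus a} \hookrightarrow N^{\oplus a} \hookrightarrow K$ (and, when $n = 0$, by an additional split lift of $(F')^{\oplus b} \subseteq N^{\oplus b}$ inside $K$) produces a quotient $K'$ that fits into a first reducing step for $M'$ of the form $0 \to (M')^{\oplus a} \to K' \to \Omega_R^{n} (M')^{\oplus b} \to 0$. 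The induction hypothesis applied to the short exact sequence $0 \to (\text{free}) \to K \to K' \to 0$ then gives $\rHdim_R(K') \leq s-1$, so $\rHdim_R(M') \leq s$.

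For the reverse inequality $\rHdim_R(M \oplus F) \leq \rHdim_R(M)$, I would convert a reducing sequence $K_0 = M, K_1, \ldots, K_r$ of $M$ into one for $M \oplus F$ by setting $K_i' = K_i \oplus F^{\oplus d_i}$ for suitable nonnegative integers $d_i$. Each reducing step is obtained by taking the direct sum of the original reducing step with an appropriate split sequence of copies of $F$: the identity sequence $0 \to F^{\oplus a_i} \to F^{\oplus a_i} \to 0 \to 0$ when $n_i \geq 1$ (exploiting $\Omega_R^{n_i}(M \oplus F) = \Omega_R^{n_i} M$ modulo free summand), or the split sequence $0 \to F^{\oplus a_i} \to F^{\oplus (a_i + b_i)} \to F^{\oplus b_i} \to 0$ when $n_i = 0$.

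The principal obstacle will be verifying that $\Hdim_R(K_r') < \infty$. Condition (iii) yields only $\Hdim_R(K_r) \leq \Hdim_R(K_r')$, not the converse finiteness, so a finer argument is needed to conclude $\rHdim_R(K_r') = 0$ and close the induction. I expect to overcome this either by augmenting the transformed sequence with a final reducing step that absorbs the extra free summand $F^{\oplus d_r}$ without increasing the overall length, or by refining the initial reducing sequence for $M$ so that $K_r$ itself already carries the necessary free structure to accommodate the added copies of $F$. This delicate bookkeeping, consistent with the paper's description of the proof as ``involved'', is where the bulk of the technical work lies.
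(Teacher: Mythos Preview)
Your overall plan is exactly the paper's: the ``auxiliary lemma'' you formulate is precisely Proposition~\ref{sonprop} in the paper, and your ``reverse inequality'' is Remark~\ref{son}. But you have inverted where the real work lies. The paper regards the direction $\rHdim_R(M\oplus F)\le \rHdim_R(M)$ as the routine one (Remark~\ref{son}, stated without proof) and devotes the entire section to your auxiliary lemma. Your worry about verifying $\Hdim_R(K_r')<\infty$ is a fair reading of condition~(iii) in isolation, but the paper simply takes this for granted --- it is clear for any invariant one actually cares about (projective or Gorenstein dimension), and the paper then \emph{uses} Remark~\ref{son} freely inside the induction for the hard direction.

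Where your sketch is too breezy is the auxiliary lemma itself. After quotienting $K$ by $(F')^{\oplus a}$ you obtain $0\to (M')^{\oplus a}\to L\to \Omega^n_R N^{\oplus b}\to 0$, but for $n\ge 1$ the Horseshoe gives $\Omega^n_R M'\cong \Omega^n_R N\oplus H_n$ with $H_n$ free, so the cokernel is only a \emph{summand} of $\Omega^n_R(M')^{\oplus b}$, not equal to it. The paper fixes this by adding $H_n^{\oplus b}$ to the middle and right and then invoking Remark~\ref{son} to pass from $L\oplus H_n^{\oplus b}$ back to $L$; your sketch does not account for this step. For $n=0$ your ``additional split lift of $(F')^{\oplus b}$'' idea actually works and is cleaner than what the paper does: the paper instead performs a pullback to produce a sequence $0\to (M\oplus F^{\oplus c})^{\oplus a}\to L\oplus F^{\oplus d}\to (M\oplus F^{\oplus c})^{\oplus b}\to 0$ for suitable $c,d$, and then needs a separate Claim (that $\rHdim_R(A)\le \rHdim_R(A\oplus H)$ when $\rHdim_R(A\oplus H)\le r$), itself proved by another pushout argument inside the same induction on $r$. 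So your $n=0$ route is a genuine simplification, while your $n\ge 1$ route has the same reliance on the ``easy'' direction that the paper has --- you just need to accept Remark~\ref{son} rather than treat it as the principal obstacle.
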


The conclusion of Proposition \ref{213} seems natural, but its proof is involved; therefore we defer the proof of the lemma until the next section so it does not interfere with the flow of the paper. 

\begin{prop} \label{p2} Let $R$ be a local ring and let $S=R/\underline{x}R$, where $\underline{x}=x_1,\ldots, x_n \subseteq \fm$ is an $R$-regular sequence. Assume the following hold:
\begin{enumerate}[\rm(i)]
\item $\Hdim$ satisfies the condition (iii) stated in \ref{setup}.
\item If $N$ is an $R$-module such that $x_1$ is a non zero-divisor on $N$, then $\Hdim_{R/x_1R}(N/x_1N)\leq \Hdim_R(N)$. 
\end{enumerate}
If $M$ is a (finitely generated) $S$-module, then it follows that $\rHdim_{S}(M)\le \rHdim_R(\syz^n_RM)+n$.
\end{prop}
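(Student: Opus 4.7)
The plan is to proceed by induction on $n$, the length of the regular sequence; the case $n = 0$ is trivial since then $S = R$ and $\syz^0_R M = M$. For the base case $n = 1$, I note that $S = R/x_1R$ and $x_1 M = 0$, so applying \ref{ara} to the $R$-module $M$ yields the short exact sequence of $S$-modules
\[
0 \to M \to \syz_R M \otimes_R S \to \syz_S M \to 0.
\]
Because $\syz_R M$ is a submodule of a free $R$-module, $x_1$ is a non zero-divisor on it, so Corollary \ref{p1c2} (applied via hypothesis (ii)) gives $\rHdim_S(\syz_R M \otimes_R S) \leq \rHdim_R(\syz_R M)$. Then Remark \ref{WH}(ii) applied over $S$ to the displayed sequence produces $\rHdim_S(M) \leq \rHdim_R(\syz_R M) + 1$, which is the desired inequality in this case.

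For the inductive step with $n \geq 2$, set $R' = R/x_1 R$, so that $x_2, \ldots, x_n$ is an $R'$-regular sequence and $S = R'/(x_2, \ldots, x_n)R'$. Since the hypotheses (i) and (ii) of the proposition transfer uniformly from $R$ to $R'$ (automatically for the standard invariants recalled in \ref{conc}), the inductive hypothesis applied to $(R', S)$ yields
\[
\rHdim_S(M) \leq \rHdim_{R'}\bigl(\syz^{n-1}_{R'} M\bigr) + (n-1).
\]
Viewing $\syz^{n-1}_{R'} M$ as an $R$-module annihilated by $x_1$, the already-proved base case applied to $R \to R'$ supplies $\rHdim_{R'}(\syz^{n-1}_{R'} M) \leq \rHdim_R(\syz_R \syz^{n-1}_{R'} M) + 1$. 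Since $\pd_R(R') = 1$, observation \ref{obs}(iii) with $a = 1$ and $b = n-1$ produces a free $R$-module $F$ with $\syz_R \syz^{n-1}_{R'} M \cong F \oplus \syz^n_R M$, and Proposition \ref{213} then absorbs this free summand. Chaining the three inequalities produces the asserted bound $\rHdim_S(M) \leq \rHdim_R(\syz^n_R M) + n$.

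The main obstacle will be cleanly handling the base case, where hypothesis (ii) must be converted—via Corollary \ref{p1c2} and the sequence from \ref{ara}—into a comparison of reducing dimensions, and verifying that the hypotheses of the proposition transfer from $R$ to $R/x_1 R$ so the induction genuinely goes through. The role of Proposition \ref{213} is precisely to absorb the free summand produced by \ref{obs}(iii), and this is why condition (iii) of \ref{setup} is imposed in hypothesis (i).
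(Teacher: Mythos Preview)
Your proof is correct and follows the same overall strategy as the paper's: induction on $n$, the base case via the short exact sequence from \ref{ara} combined with Corollary~\ref{p1c2} and Remark~\ref{WH}(ii), and the inductive step via \ref{obs}(iii) together with Proposition~\ref{213}. The only difference is the direction in which you peel off the regular sequence. You set $R' = R/x_1R$, apply the base case to $R \to R'$, and invoke the induction hypothesis on $(R', S)$ with the shorter sequence $x_2,\dots,x_n$; the paper instead sets $T = R/(x_1,\dots,x_{n-1})R$, applies the induction hypothesis to $(R, T)$ with $x_1,\dots,x_{n-1}$, and invokes the base case on $T \to S = T/x_nT$. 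The paper's ordering has the slight formal advantage that its recursive call keeps the base ring $R$ and the distinguished element $x_1$ unchanged, so hypothesis~(ii) as literally stated continues to apply throughout the recursion and Proposition~\ref{213} is only ever used over $R$; its sole extra assumption is the analogue of~(ii) for the single pair $(T,x_n)$. Your ordering pushes the ``transfer of hypotheses'' issue (which you flag explicitly) into the recursive call, so the analogues of both~(i) and~(ii) are needed over every intermediate quotient. For the standard invariants of \ref{conc} the two routes are of course equivalent.
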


\begin{proof} We proceed by induction on $n$. Assume first $n=1$ and set $x_1=x$. Then, since $x$ is a non zero-divisor on $\syz_R M$, it follows from Corollary \ref{p1c2} that $\rHdim_{S} \left(\dfrac{\syz_R M}{x\cdot \syz_RM} \right) \le \rHdim_R \left(\syz_R M\right)$. 

Note, by (\ref{ara}.1), we have the following short exact sequence of $S$-modules:
\begin{equation}\tag{\ref{p2}.1}
0 \to M \to \dfrac{\syz_R M}{x\cdot \syz_RM}  \to \Omega_{S}(M) \to 0. 
\end{equation}
Therefore Remark \ref{WH}(ii) applied to (\ref{p2}.1) gives $\rHdim_{S}(M)\leq \rHdim_{S} \left(\dfrac{\syz_R M}{x\cdot \syz_RM} \right) +1$. This establishes the base case $n=1$ of the induction. 

Next we assume $n\geq 2$ and set $T=R/\underline{x'}R$, where $\underline{x'}=x_1,\ldots,x_{n-1}$. Then $x_n$ is a non zero-divisor on $T$ and $S=T/x_nT$. So, by the case where $n=1$, we have:
\begin{equation}\tag{\ref{p2}.2}
\rHdim_{S}(M)\le \rHdim_T(\syz_T M)+1.
\end{equation}
We make use of the induction hypothesis with the module $\syz_T M$ and conclude that:
\begin{equation}\tag{\ref{p2}.3}
\rHdim_T(\syz_T M)\le \rHdim_R(\syz^{n-1}_{R}\syz_{T}M)+n-1. 
\end{equation}
Hence (\ref{p2}.2) and (\ref{p2}.3) yield:
\begin{equation}\tag{\ref{p2}.4}
\rHdim_{S}(M)\le \rHdim_R(\syz^{n-1}_{R}\syz_{T}M)+n. 
\end{equation}
Next, note that, $\syz^{n-1}_{R}\syz_{T}M \cong \syz^{n}_RM\oplus F$ for some free $R$-module $F$; see \ref{obs}(iii). Thus the induction argument is now complete from (\ref{p2}.4) and Proposition \ref{213}.
\end{proof}

%\begin{setup} \label{Setup} From now on, throughout this section, given a ring $R$, $\Hdim$ denotes a homological dimension of $R$-modules which satisfies all the four conditions stated in \ref{setup}.%, and also the property (iv) for each local ring map $(R,\m)\to (S,\n)$. 
%\end{setup}

We need the following observation for the proof of Theorem \ref{nzdse}.

\begin{chunk} \label{c26} Let $R$ be a ring (not necessarily local and Noetherian) and let $0 \to L \to M \xrightarrow[]{p} N\oplus F \to 0$ be a short exact sequence of $R$-modules, where $F$ is free. Then we have the following pullback diagram, that is, a commutative diagram with exact rows and columns:
\[
\begin{tikzcd}
 & & 0 \ar[d] & 0 \ar[d] & \\
0 \ar[r] & L \ar[r] \ar[d,equal] & M' \ar[r] \ar[d] & N \ar[r] \ar[d] & 0 \\
0 \ar[r] & L \ar[r] & M \ar[r,"p"] \ar[d] & N \oplus F \ar[r] \ar[d] & 0\\
 & & F \ar[r,equal] \ar[d] & F \ar[d] &\\
 & & 0 & 0 &
\end{tikzcd}
\]
This diagram yields a short exact sequence of $R$-modules $0 \to L \to M' \to N \to 0$, where $M\cong M'\oplus F$. \qed
\end{chunk}

%Let $R$ be a ring (not necessarily local and Noetherian) and let $0 \to L \to M \xrightarrow[]{p} N\oplus F \to 0$ be a short exact sequence of $R$-modules, where $F$ is free. Then, by considering the diagram obtained by taking the pullback of $p$ with the natural surjection $N\oplus F \twoheadrightarrow N$, we obtain an $R$-module $M'$ and a short exact sequence of $R$-modules $0 \to L \to M' \to N \to 0$, where $M\cong M'\oplus F$.
%Then, by considering the diagram obtained by taking the pullback of $p$ with the natural surjection $N\oplus F \twoheadrightarrow N$, we obtain an $R$-module $M'$ and a short exact sequence of $R$-modules $0 \to L \to M' \to N \to 0$, where $M\cong M'\oplus F$. \qed

The next theorem plays a key role for the proof of Theorem \ref{mainthm}. 

\begin{thm}\label{nzdse}  Let $(R,\m)\to (S,\n)$ be a local ring homomorphism, where $S$ is a finitely generated $R$-module and $\pd_R(S)\le n<\infty$ for some integer $n$. Assume $\Hdim$ satisfies all the conditions stated in \ref{setup}. 
Assume further that $\Hdim_R(N)<\infty$ whenever $N$ is a (finitely generated) $S$-module such that $\Hdim_S(N)<\infty$. Then $\rHdim_R(\syz^n_R N)\le \rHdim_S (N)$ for each finitely generated $S$-module $N$. 
\end{thm}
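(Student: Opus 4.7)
The plan is to induct on $r = \rHdim_S(N)$. For the base case $r = 0$, we have $\Hdim_S(N) < \infty$ by definition of the reducing dimension, so the running hypothesis of the theorem gives $\Hdim_R(N) < \infty$, and $n$ applications of condition (ii) of \ref{setup} yield $\Hdim_R(\syz^n_R N) < \infty$, hence $\rHdim_R(\syz^n_R N) = 0$.

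For the inductive step $r \geq 1$, I first invoke Remark \ref{WH}(i) to obtain a short exact sequence of $S$-modules
\[
0 \to N^{\oplus a} \to K \to \syz^c_S N^{\oplus b} \to 0
\]
with $\rHdim_S(K) = r - 1$. Regarding this as a sequence of $R$-modules and iterating the horseshoe lemma $n$ times produces a short exact sequence of $R$-modules
\[
0 \to (\syz^n_R N)^{\oplus a} \to L \to (\syz^n_R \syz^c_S N)^{\oplus b} \to 0,
\]
in which $L$ and $\syz^n_R K$ differ only by free summands. Since $n \geq \pd_R(S)$, \ref{obs}(iii) yields $\syz^n_R \syz^c_S N \cong \syz^{n+c}_R N \oplus G$ for a free $R$-module $G$, and the pullback construction of \ref{c26} strips the free summand $G^{\oplus b}$ off the right-hand term to produce
\[
0 \to (\syz^n_R N)^{\oplus a} \to L' \to \syz^c_R(\syz^n_R N)^{\oplus b} \to 0
\]
with $L \cong L' \oplus G^{\oplus b}$ (using $\syz^{n+c}_R N = \syz^c_R \syz^n_R N$ for minimal syzygies). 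Since $L'$ still differs from $\syz^n_R K$ only by free summands, Proposition \ref{213} gives $\rHdim_R(L') = \rHdim_R(\syz^n_R K)$, which the inductive hypothesis bounds by $r - 1$, and Remark \ref{WH}(ii) then concludes $\rHdim_R(\syz^n_R N) \leq \rHdim_R(L') + 1 \leq r$.

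The main obstacle will be the careful tracking of free summands throughout: the horseshoe lemma yields the $n$-th syzygy of the middle term only up to free summands, and \ref{obs}(iii) introduces another free summand on the right. Proposition \ref{213} is precisely what makes it possible to pass from a stable isomorphism (equality up to free summands) to equality of reducing invariants, and without it the argument would collapse.
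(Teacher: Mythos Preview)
Your proof is correct and follows essentially the same route as the paper's. Both induct on $r=\rHdim_S(N)$, apply the horseshoe construction to the reducing sequence of $N$ to obtain an exact sequence of $n$-th $R$-syzygies (the paper cites \cite[2.2(1)]{DaoTak} for this), use \ref{obs}(iii) to rewrite the right-hand term as $\syz^{n+c}_R N^{\oplus b}\oplus(\text{free})$, strip the free summand via \ref{c26}, and then invoke Proposition~\ref{213} to identify $\rHdim_R$ of the new middle term with $\rHdim_R(\syz^n_R K)$ before concluding with Remark~\ref{WH}(ii) and the inductive hypothesis.
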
  

%f $(R,\m)\to (S,\n)$ is a local ring homomorphism, where $S$ is a finitely generated $R$-module such that $\pd_R(S)<\infty$, and $N$ is a (finitely generated) $S$-module such that $\Hdim_S(N)<\infty$, then it follows that $\Hdim_R(N)<\infty$.

\begin{proof} Let $N$ be a nonzero finitely generated $S$-module such that $\rHdim_S (N)=r<\infty$. We proceed by induction on $r$ to show that $\rHdim_R(\syz^n_R N)\le r$.

Assume first $r=0$. Then $\rHdim_S(N)=0$, i.e., $\Hdim_S(N)<\infty$. Hence, by the hypotheses, we conclude that $\Hdim_{R}(\syz_{R}{N})\le \Hdim_{R}(N)<\infty$. This shows that $\rHdim_R(\syz^n_R M)=0=r$.

Next assume $r\geq 1$. Then it follows from Definition \ref{rdim} that there exists a short exact sequence of (finitely generated) $S$-modules
\begin{equation}\tag{\ref{nzdse}.1}
0\to N^{\oplus a} \to K \to \syz^c_S N^{\oplus b}\to 0, 
\end{equation}
where $a\geq 1$, $b\geq 1$ and $c\geq 0$ are integers, and
$\rHdim_S(K)= \rHdim_S(N)-1=r-1$. So, by the induction hypothesis, we have: 
\begin{equation}\tag{\ref{nzdse}.2}
\rHdim_R(\syz^n_R K)\le \rHdim_S (K)=r-1.
\end{equation}

We obtain, by applying $\syz^n_R(-)$ to (\ref{nzdse}.1), the following exact sequence of $R$-modules:
\begin{equation}\tag{\ref{nzdse}.3}
0\to \syz^n_RN^{\oplus a} \to \syz^n_RK\oplus G \to \syz^{n}_R \syz^c_S N^{\oplus b} \to 0,
\end{equation}
where $G$ is a free $R$-module; see \cite[2.2(1)]{DaoTak}.

Note that $\syz^{n}_R \syz^c_S N^{\oplus b} \cong \syz^{n+c}_R N^{\oplus b}\oplus F$ for some free $R$-module $F$; see \ref{obs}(iii).
Then, by using \ref{c26}, we obtain the following exact sequence of $R$-modules:
\begin{equation}\tag{\ref{nzdse}.4}
0\to \syz^n_RN^{\oplus a} \to K' \to \syz^{n+c}_R N^{\oplus b} \to 0,
\end{equation}
where $\syz^n_RK\oplus G\cong K'\oplus F$.
Now we observe the following (in)equalities which complete the proof:
\begin{align}\notag{} 
\rHdim_R(\syz^n_R N) \leq & 
\rHdim_R(K')+1 \\ = & \tag{\ref{nzdse}.5} \rHdim_R(\syz^n_RK)+1 \\ \notag{} \leq &(r-1)+1=r. 
\end{align}
Here, in (\ref{nzdse}.5), the first inequality follows from (\ref{nzdse}.4) and Remark \ref{WH}(ii), and the second one follows from (\ref{nzdse}.2); moreover, the first equality is due to Proposition \ref{213}. 
\end{proof}

\begin{rem} \label{nzdseremark} Let $(R,\m)\to (S,\n)$ be a local ring homomorphism, where $S$ is a finitely generated $R$-module and $\pd_R(S)<\infty$. Then the hypothesis on the ring map considered in Theorem \ref{nzdse}, namely the condition that $\Hdim_R(N)<\infty$ whenever $N$ is a finitely generated $S$-module such that $\Hdim_S(N)<\infty$, holds when $\Hdim$ equals the projective dimension. This condition also holds for certain ring homomorphisms when $\Hdim$ equals the Gorenstein dimension. For example, if the closed fibre of the ring map $(R,\m)\to (S,\n)$ is Gorenstein, then it follows that $\Gdim_S(N)<\infty$ if and only if $\Gdim_R(N)<\infty$ for each finitely generated $S$-module $N$; see \cite[4.2]{LGH}  and \cite[7.11]{AFGD}. 
Therefore, if we consider the natural surjection $R \to S=R/(\underline{x})$ for some $R$-regular sequence $\underline{x} \subseteq \fm$, then $\Gdim_S(N)<\infty$ if and only if $\Gdim_R(N)<\infty$ for each finitely generated $S$-module $N$. \qed
\end{rem}

Next is a consequence of Theorem \ref{nzdse}; this result, for the case where $\Hdim$ is the projective dimension, or the Gorenstein dimension of modules, can also be deduced from \cite{CA}. However, as mentioned in Remark \ref{diff}, the definition given in \cite{CA} is slightly more restrictive than the definition of reducing dimensions we adopt in this paper; see Definition \ref{rdim}.

\begin{cor} \label{nzdsec1} Let $R$ be a local ring. Assume $\Hdim$ satisfies all the conditions stated in \ref{setup}. Then it follows that $\rHdim_R(\syz^i_R M)\le \rHdim_R (M)$ for each $R$-module $M$ and for each $i\geq 0$.
\end{cor}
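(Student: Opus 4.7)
The plan is to derive this corollary as an immediate specialization of Theorem \ref{nzdse} by taking the trivial local ring homomorphism. Specifically, I would apply Theorem \ref{nzdse} to the identity map $\mathrm{id}\colon (R,\m)\to (R,\m)$, viewed as a local homomorphism from $R$ to $S:=R$.

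Under this choice, all the hypotheses of Theorem \ref{nzdse} are trivially satisfied. First, $S=R$ is certainly a finitely generated $R$-module, and $\pd_R(S)=\pd_R(R)=0$, so for any fixed $i\ge 0$ we may take $n:=i$ and the condition $\pd_R(S)\le n<\infty$ holds. Second, the assumption that $\Hdim$ satisfies all three conditions stated in \ref{setup} is part of the hypothesis of the corollary. Third, the compatibility condition required by Theorem \ref{nzdse}, namely that $\Hdim_R(N)<\infty$ whenever $N$ is a finitely generated $S$-module with $\Hdim_S(N)<\infty$, holds vacuously because $S=R$ and so the two invariants are literally identical on every module.

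With these hypotheses in place, Theorem \ref{nzdse} applied to the $S$-module $N:=M$ yields
\[
\rHdim_R(\syz^i_R M)=\rHdim_R(\syz^n_R N)\le \rHdim_S(N)=\rHdim_R(M),
\]
which is exactly the desired inequality. Since $i\ge 0$ was arbitrary, this completes the argument.

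The ``main obstacle'' here is therefore not in the corollary itself but in Theorem \ref{nzdse}, which does all the heavy lifting; once that theorem is in hand, the corollary is a one-line specialization. The only thing worth double-checking is that the base case $i=0$ of the claimed inequality is also captured: indeed, taking $n=0$ in Theorem \ref{nzdse} gives $\rHdim_R(\syz^0_R M)=\rHdim_R(M)\le \rHdim_R(M)$ trivially, so no separate argument is required.
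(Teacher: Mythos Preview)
Your proof is correct and follows exactly the same approach as the paper, which simply states that the corollary follows from Theorem \ref{nzdse} by setting $R=S$. Your additional verification of the hypotheses (in particular choosing $n=i$ and noting the compatibility condition is vacuous) spells out precisely what the paper leaves implicit.
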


\begin{proof} The corollary follows from Theorem \ref{nzdse} by setting $R=S$.
\end{proof}

We need a few more results to prove Theorem \ref{mainthm}; in the following $\widehat{(-)}$ denotes the completion functor.

\begin{chunk} \label{comple0} Let $R$ be a local ring. Assume one has that $\Hdim_{\widehat R}(\widehat M)=\Hdim_R(M)$ for each $R$-module $M$. If $N$ is an $R$-module, then it follows from Definition \ref{rdim} that $\rHdim_{\widehat R}(\widehat N)\leq \rHdim_R(N)$. \qed
\end{chunk}

We can show, under the hypothesis of \ref{comple0}, that $\rHdim_{\widehat R}(\widehat N)= \rHdim_R(N)$ if $N$ is an $R$-module which is locally free on the punctured spectrum of $R$, that is, if $N_{\fp}$ is free over $R_{\fp}$ for all non-maximal prime ideals $\fp$ of $R$.

%\begin{chunk} (\cite[2.7(i)]{TheBook})
%\end{chunk}

\begin{prop}\label{comple} Let $R$ be a local ring. Assume one has that $\Hdim_{\widehat R}(\widehat M)=\Hdim_R(M)$ for each $R$-module $M$. If $N$ is an $R$-module that is locally free on punctured spectrum of $R$, then $\rHdim_{\widehat R}(\widehat N)=\rHdim_R(N)$.
\end{prop}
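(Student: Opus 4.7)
The inequality $\rHdim_{\widehat R}(\widehat N)\le \rHdim_R(N)$ is already furnished by \ref{comple0}, so the task is the reverse inequality $\rHdim_R(N)\le \rHdim_{\widehat R}(\widehat N)$. The plan is to proceed by induction on $r:=\rHdim_{\widehat R}(\widehat N)$, which we may assume is finite. The base case $r=0$ is immediate: then $\Hdim_{\widehat R}(\widehat N)<\infty$, so by hypothesis $\Hdim_R(N)<\infty$, giving $\rHdim_R(N)=0$.

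For the inductive step, I would invoke Remark \ref{WH}(i) to obtain a short exact sequence of $\widehat R$-modules
\[
0\to \widehat N^{\oplus a}\to K\to \syz^n_{\widehat R}\widehat N^{\oplus b}\to 0
\]
with $\rHdim_{\widehat R}(K)=r-1$. Since completion of a free resolution of $N$ over $R$ is a free resolution of $\widehat N$ over $\widehat R$, we have $\syz^n_{\widehat R}\widehat N \cong \widehat{\syz^n_R N}$ up to free summands, so after absorbing free summands via \ref{c26} we may assume the right-hand term is $\widehat{\syz^n_R N^{\oplus b}}$. Next I would lift this extension to an $R$-extension. Since $N$ is locally free on the punctured spectrum, localizing a free resolution of $N$ at any non-maximal prime $\fp$ shows that $\syz^n_R N$ is also locally free on the punctured spectrum. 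Hence $\Ext^1_R(\syz^n_R N^{\oplus b},N^{\oplus a})$ is supported only at $\fm$, so it has finite length and is therefore $\widehat R$-complete. Combined with the flat base-change isomorphism, this yields
\[
\Ext^1_R(\syz^n_R N^{\oplus b},N^{\oplus a})\;\cong\;\Ext^1_{\widehat R}\bigl(\widehat{\syz^n_R N^{\oplus b}},\widehat{N^{\oplus a}}\bigr).
\]
Consequently, the extension class of $K$ pulls back to an $R$-extension
\[
0\to N^{\oplus a}\to L\to \syz^n_R N^{\oplus b}\to 0
\]
whose completion reproduces $K$ (up to free summands).

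Finally, I would observe that $L$ is itself locally free on the punctured spectrum, being an extension of two such modules, and that $\rHdim_{\widehat R}(\widehat L)=\rHdim_{\widehat R}(K)=r-1$ by Proposition \ref{213}, which absorbs any free-summand discrepancy. By the induction hypothesis applied to $L$, we obtain $\rHdim_R(L)\le r-1$. Then Remark \ref{WH}(ii) applied to the displayed $R$-sequence gives $\rHdim_R(N)\le \rHdim_R(L)+1\le r$, closing the induction.

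The main technical hurdle is the lifting step: one must be careful that $\widehat{\syz^n_R N}$ and $\syz^n_{\widehat R}\widehat N$ coincide up to free summands so that the $\Ext^1$-identification is legitimate, and then that the free-summand discrepancy between $\widehat L$ and $K$ does not disturb the equality $\rHdim_{\widehat R}(\widehat L)=r-1$ — both of which are handled cleanly by Proposition \ref{213} together with \ref{c26}.
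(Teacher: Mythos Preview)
Your approach mirrors the paper's: induct on $r=\rHdim_{\widehat R}(\widehat N)$, lift the $\widehat R$-extension to an $R$-extension using that $\Ext^1_R(\syz^n_R N^{\oplus b},N^{\oplus a})$ has finite length (hence equals its own completion) together with flat base change, observe the middle term remains locally free on the punctured spectrum, apply the induction hypothesis to it, and finish with Remark~\ref{WH}(ii).

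One correction to what you flag as ``the main technical hurdle'': the free-summand discrepancy you worry about does not arise. Since $\widehat R$ is flat over $R$, \ref{obs}(i) gives $\syz^n_{\widehat R}\widehat N \cong \widehat{\syz^n_R N}$ on the nose (completing a minimal $R$-free resolution of $N$ yields a minimal $\widehat R$-free resolution of $\widehat N$). Hence the lifted extension completes exactly to the original one, $\widehat L\cong K$, and no appeal to \ref{c26} or Proposition~\ref{213} is needed. This matters, because Proposition~\ref{213} requires condition~(iii) of \ref{setup}, which is \emph{not} among the hypotheses of the present proposition; your invocation of it is therefore unjustified as stated. The paper's proof simply writes the right-hand term as $\widehat{\syz^c_R N^{\oplus b}}$ from the outset and avoids this detour entirely.
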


\begin{proof} Let $N$ be an $R$-module which is locally free on punctured spectrum of $R$. Then, in view of \ref{comple0}, it is enough to prove $\rHdim_R(N)\le \rHdim_{\widehat R}(\widehat N)$. Notice we may assume $\rHdim_{\widehat R}(\widehat N)<\infty$ as otherwise there is nothing to prove. We set $r=\rHdim_{\widehat R}(\widehat N)$ and proceed by induction on $r$.

If $r=0$, then we have $\Hdim_{\widehat R}(\widehat N)<\infty$; see Definition \ref{rdim}. Hence our assumption yields $\Hdim_R (N)=\Hdim_{\widehat R}(\widehat N)$, and so shows that $\rHdim_R (N)=0=\rHdim_{\widehat R}(\widehat N)$.

Now let $r\ge 1$. Then there exists a short exact sequence of $\widehat R$-modules: $0\to \widehat {N^{\oplus a}} \to  K \to \widehat{\syz^c_{ R} N^{\oplus b}}\to 0$, where $\rHdim_{\widehat R}( K)=r-1$, and $a\geq 1$, $b\geq 1$, and $c\geq 0$ are integers; see Remark \ref{WH}(i). Note that, since $\Ext^1_R(\syz^c_{ R} N^{\oplus b}, N^{\oplus a})$ is a finite length $R$-module, in the following the natural maps are isomorphisms:  $$\Ext^1_R(\syz^c_{ R} N^{\oplus b}, N^{\oplus a}) \cong \Ext^1_R(\syz^c_{ R} N^{\oplus b}, N^{\oplus a})\otimes_R\widehat{R} \cong  \Ext^1_{\widehat{R}}(\widehat{\syz^c_{ R} N^{\oplus b}}, \widehat {N^{\oplus a}})$$
As the composition of these two natural maps is given by the completion of short exact sequences, we deduce that there is a short exact sequence of $R$-modules $0\to N^{\oplus a} \to X \to \syz^c_{ R} N^{\oplus b}\to 0$, where $K\cong \widehat X$; see also \cite[2.7(i)]{TheBook}. 

Note that an $R$-module $T$ is locally free on the punctured spectrum of $R$ if and only if $\widehat{T}$ is locally free on the punctured spectrum of $\widehat{R}$. Therefore, since $K\cong \widehat X$ and $K$ is locally free on the punctured spectrum of $\widehat{R}$, we conclude that $X$ is locally free on punctured spectrum of $R$. As $\rHdim_{\widehat R}(\widehat{X})=\rHdim_{\widehat R}(K)=r-1$, we see by the induction hypothesis that $\rHdim_{R}(X)\le \rHdim_{\widehat R}(\widehat X)$. Hence, in view of Remark \ref{WH}(ii), the exact sequence $0\to N^{\oplus a} \to X \to \syz^c_R N^{\oplus b}\to 0$ implies that $\rHdim_R(N)\le \rHdim_R(X)+1\leq (r-1)+1=r$. This completes the induction and the proof.   
\end{proof}

%, hence $\rHdim_{\widehat R}(\widehat X)=r-1$. Now, the exact sequence $0\to N^{\oplus a} \to X \to \syz^c_R N^{\oplus b}\to 0$ says that $X$ is locally free on punctured spectrum of $R$. So by induction hypothesis, we get $\rHdim_{R}(X)\le \rHdim_{\widehat R}(\widehat X)=r-1$. Now again the exact sequence $0\to N^{\oplus a} \to X \to \syz^c_R N^{\oplus b}\to 0$ says that $\rHdim_R(N)\le r=\rHdim_{\widehat R}(\widehat N)$. This completes the inductive step, hence the proof.   
%By Remark \ref{obs}(i) we get  $0\to \widehat {N^{\oplus a}} \to  K \to \widehat{\syz^c_{ R} N^{\oplus b}}\to 0$. Since  this exact sequence is a member of $\Ext^1_{\widehat R}(\widehat{\syz^c_{ R} N^{\oplus b}},\widehat {N^{\oplus a}})$, and $\syz^c_R N^{\oplus b}$ is locally free on punctured spectrum, so by discussion \ref{38}, we get that there exists an exact sequence of $R$-modules $\sigma: 0\to N^{\oplus a} \to X \to \syz^c_R N^{\oplus b}\to 0$ such that  $\sigma\otimes_R \widehat R: 0\to \widehat{N^{\oplus a}} \to \widehat X \to \widehat{\syz^c_R N^{\oplus b}}\to 0$ belongs to the same equivalence class as $0\to \widehat {N^{\oplus a}} \to  K \to \widehat{\syz^c_{ R} N^{\oplus b}}\to 0$ in $\Ext^1_{\widehat R}(\widehat{\syz^c_{ R} N^{\oplus b}},\widehat {N^{\oplus a}})$. 

\begin{cor}\label{corcompl} Let $R$ be a local ring and let $N$ be an $R$-module which is locally free on punctured spectrum of $R$. Then $\rpdim_{\widehat R}(\widehat N)=\rpdim_R(N)$ and $\rGdim_{\widehat R}(\widehat N)=\rGdim_R(N)$. 
\end{cor}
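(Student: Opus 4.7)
The strategy is to derive the corollary directly from Proposition \ref{comple} by verifying its key hypothesis, namely that the homological invariant in question is preserved under completion, for both $\Hdim = \pd$ and $\Hdim = \Gdim$.

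First I would treat the projective dimension case. Since $\widehat{R}$ is faithfully flat over $R$ and $M$ is finitely generated, a minimal free resolution of $M$ over $R$ remains exact and minimal after tensoring with $\widehat{R}$; equivalently, $\Tor^R_i(M, \widehat R)=0$ for all $i\geq 1$. Thus $\pd_{\widehat R}(\widehat M)=\pd_R(M)$ for every finitely generated $R$-module $M$. Applying Proposition \ref{comple} with $\Hdim=\pd$ to the module $N$ (which is locally free on the punctured spectrum by hypothesis) then yields the first equality $\rpdim_{\widehat R}(\widehat N)=\rpdim_R(N)$.

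Next I would handle the Gorenstein dimension case in the same way. Here I would invoke the standard fact that $\Gdim_{\widehat R}(\widehat M)=\Gdim_R(M)$ for each finitely generated $R$-module $M$; this can be found, e.g., in \cite{Gdimbook}, and essentially follows from the faithful flatness of $R\to \widehat R$ together with the fact that $\widehat{\Hom_R(M,R)}\iso \Hom_{\widehat R}(\widehat M,\widehat R)$ and the preservation of $\Ext$ modules under completion. Once this equality is in hand, Proposition \ref{comple} applied with $\Hdim=\Gdim$ delivers $\rGdim_{\widehat R}(\widehat N)=\rGdim_R(N)$.

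There is no real obstacle here; the corollary is essentially a packaging of Proposition \ref{comple} with two well-known invariance-under-completion statements. The only mildly delicate point is citing the correct reference for the Gorenstein dimension invariance, since this is slightly less elementary than the projective dimension case, but it is standard and requires no new argument.
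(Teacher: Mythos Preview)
Your proof is correct and follows the same approach as the paper: verify that $\pd_{\widehat R}(\widehat M)=\pd_R(M)$ and $\Gdim_{\widehat R}(\widehat M)=\Gdim_R(M)$ for every finitely generated $R$-module $M$, and then invoke Proposition \ref{comple}. The paper cites \ref{obs}(i) for the projective case and \cite[5.10(c)]{Larsthesis} for the Gorenstein case, but otherwise the argument is identical.
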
  

\begin{proof} Note that $\pd_{\widehat R}(\widehat M)=\pd_R(M)$ and $\Gdim_{\widehat R}(\widehat M)=\Gdim_R(M)$ for each $R$-module $M$; see, for example, \ref{obs}(i) and \cite[5.10(c)]{Larsthesis}. Therefore the result follows from Proposition \ref{comple}.
\end{proof}

We are now ready to prove Theorem \ref{mainthm} as advertised in the introduction.

\begin{proof}[Proof of Theorem \ref{mainthm}] Note that have:
$$
\rpdim_{\widehat{S}}(k)\leq \rpdim_R( \Omega^n_{R} k)+n \leq \rpdim_R( k)+n <\infty.
$$
Here, in view of \ref{conc}, the first inequality is due to Proposition \ref{p2} and the second one is due to Corollary \ref{nzdsec1}; the third inequality is clear if $R$ is regular, and it holds by \ref{mincor} in case $R$ is singular.  Now, since $k$ is a complete $S$-module that is locally free on the punctured spectrum of $S$, it follows that $\rpdim_{S}(k)=\rpdim_{\widehat S}(\widehat k)=\rpdim_{\widehat S}(k)$; see from Corollary  \ref{corcompl}. This completes the proof.
%$k\cong R/\m $ is the residue field of $\widehat S$, hence also the residue field of $S$. Since $k$ has finite length, so $k\cong \widehat k$, and by Corollary \ref{corcompl} we have $\rpdim_{S}(k)=\rpdim_{\widehat S}(\widehat k)=\rpdim_{\widehat S}(k)$. By the first case already established now, $\rpdim_{\widehat S}(k)<\infty$, so we get $\rpdim_S(k)<\infty$. 
%This also shows that $\rpdim_{R}(k)<\infty$ since each $S$-module that has finite projective dimension over $S$ also has finite projective dimension over $R$; see also \ref{s1}.
\end{proof}  

We finish this section with a further application of Proposition \ref{p2} and Corollary \ref{nzdsec1}. Note that a local ring $R$ is called \emph{G-regular} \cite{TakG} provided that $\Gdim_R(M)<\infty$ if and only if $\pd_R(M)<\infty$ for each $R$-module $M$. As, in general, over a local ring $R$, we have that $\Gdim_R(M)=\pd_R(M)$ for each $R$-module $M$ with $\pd_R(M)<\infty$, it is clear that each regular local ring is G-regular. A nontrivial fact is that a non-Gorenstein Cohen-Macaulay local ring of minimal multiplicity is $G$-regular; see \cite[5.1]{TakG} for the details. On the other hand, deformations of G-regular rings, that is, G-regular rings modulo ideals generated by regular sequences, need not be G-regular as we see next:  

\begin{eg} \label{exendo} Let $R=\comp[\![t^3, t^4, t^5]\!]=\comp[\![x,y,z]\!]/(x^3-yz, z^2-yx^2, y^2-xz)$ and let $S=R/x^2R$. Then $x^2$ is a non zero-divisor on $R$ and $R$ is G-regular since it is a non-Gorenstein Cohen-Macaulay local ring with minimal multiplicity. Moreover, as $x^2 \in \fm^2$, it follows from \cite[4.6]{TakG} that $S$ is not G-regular. \qed
\end{eg}

Although deformations of G-regular rings are not G-regular in general, we finish this section by making a related observation: a deformation of a G-regular ring is \emph{reducing G-regular}, that is, a ring over which each module that has finite reducing Gorenstein dimension also has finite reducing projective dimension. First we note that each G-regular ring is reducing G-regular.

\begin{chunk} \label{lr} Let $R$ be a G-regular local ring. Then, by definition, $\pd_R(M)=\Gdim_R(M)$ for each $R$-module $M$. Therefore it follows from Definition \ref{rdim} that 
$\rpdim_R(M)=\rGdim_R(M)$ for each $R$-module $M$.
% and let $M$ be an $R$-module such that $\rGdim_R(M)<\infty$. Then, by definition, there exist integers $r\geq 1$, $a_i\geq 1$, $b_i\geq 1$, $n_i\geq 0$, and short exact sequences of $R$-modules of the form 
%$0 \to K_{i-1}^{{\oplus a_i}} \to K_{i} \to \Omega_R^{n_i}K_{i-1}^{{\oplus b_i}} \to 0$ for each $i=1, \ldots r$, where $K_0=M$ and $\Gdim_R(K_r)<\infty$. As $R$ is G-regular, it follows that $\pd_R(K_r)<\infty$ and hence $\rpdim_R(M)=\rGdim_R(M)<\infty$; see Definition \ref{rdim}.
\end{chunk}

\begin{prop} \label{endo} Let $R$ be a G-regular local ring and let $S=R/\underline{x}R$, where $\underline{x}=x_1,\ldots, x_n \subseteq \fm$ is an $R$-regular sequence. If $M$ is a (finitely generated) $S$-module, then it follows that $$\rGdim_S(M)\leq  \rpdim_S(M) \leq  \rGdim_S(M)+n.$$  So, for each finitely generated $S$-module $N$, we have $\rpdim_S(N)<\infty$ if and only if $\rGdim_S(N)<\infty$. %Assume $R$ is G-regular, for example, a Cohen-Macaulay local ring with minimal multiplicity. 
\end{prop}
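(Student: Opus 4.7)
The first inequality $\rGdim_S(M)\leq \rpdim_S(M)$ is immediate from Definition \ref{rdim}: since $\Gdim_R(K)<\infty$ whenever $\pd_R(K)<\infty$, any reducing $\pd$-sequence $\{K_0,\ldots,K_r\}$ of $M$ is automatically a reducing $\Gdim$-sequence of $M$. So the substance of the proposition lies entirely in the second inequality $\rpdim_S(M)\leq \rGdim_S(M)+n$, and once both are proved the final biconditional is a trivial consequence.

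My plan for the second inequality is a three-step loop that goes $S \leadsto R \leadsto R \leadsto S$, using Proposition \ref{p2} to descend along $R\to S$, using G-regularity to swap homological invariants on $R$, and using Theorem \ref{nzdse} to ascend back. In detail: first, I would apply Proposition \ref{p2} to the $R$-regular sequence $\underline{x}$ with $\Hdim=\pd$. Its hypotheses are easy to verify: condition (iii) of \ref{setup} is satisfied by $\pd$ via \ref{conc}, and $\pd_{R/x_1R}(N/x_1N)\le \pd_R(N)$ for every $R$-module $N$ on which $x_1$ is a non zero-divisor is a standard change-of-rings fact. Proposition \ref{p2} then yields
\[
\rpdim_S(M)\ \leq\ \rpdim_R(\syz^n_R M)+n.
\]

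Second, since $R$ is G-regular, Observation \ref{lr} gives $\rpdim_R = \rGdim_R$ identically on $R$-modules, so $\rpdim_R(\syz^n_R M)=\rGdim_R(\syz^n_R M)$. Third, I would apply Theorem \ref{nzdse} to the natural surjection $R\to S=R/\underline{x}R$ with $\Hdim=\Gdim$. The hypotheses hold as follows: $S$ is a finitely generated $R$-module with $\pd_R(S)=n<\infty$ because $\underline{x}$ is $R$-regular; $\Gdim$ satisfies all conditions of \ref{setup} by \ref{conc}; and the transfer condition ``$\Gdim_S(N)<\infty$ implies $\Gdim_R(N)<\infty$ for every finitely generated $S$-module $N$'' is precisely the content of Remark \ref{nzdseremark} for a surjection modulo an $R$-regular sequence. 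Theorem \ref{nzdse} then gives $\rGdim_R(\syz^n_R M)\leq \rGdim_S(M)$. Chaining the three steps produces $\rpdim_S(M)\leq \rGdim_R(\syz^n_R M)+n\leq \rGdim_S(M)+n$, as required.

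There is no real obstacle here; the proof is a bookkeeping exercise that assembles tools already built in the section. The only points requiring attention are the verifications that $\pd$ meets the hypotheses of Proposition \ref{p2} and that $\Gdim$ meets the hypotheses of Theorem \ref{nzdse} for this specific map $R\to R/\underline{x}R$ (in particular, invoking Remark \ref{nzdseremark} for the Gorenstein-dimension finiteness transfer, which is the subtlest ingredient).
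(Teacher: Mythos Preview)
Your proposal is correct and follows exactly the paper's argument: the chain $\rpdim_S(M)\le \rpdim_R(\syz^n_R M)+n=\rGdim_R(\syz^n_R M)+n\le \rGdim_S(M)+n$ via Proposition~\ref{p2}, \ref{lr}, and Theorem~\ref{nzdse} with Remark~\ref{nzdseremark} is precisely what the paper does. The only cosmetic difference is that the paper cites \cite[1.3.5]{BH} for the standard change-of-rings inequality $\pd_{R/x_1R}(N/x_1N)\le \pd_R(N)$ where you simply call it a standard fact.
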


\begin{proof} Let $M$ be a finitely generated $S$-module. Then the first inequality follows by Definition \ref{rdim}. For the second inequality, note that we have:
\begin{equation}\notag{}
\rpdim_{S}(M)\le \rpdim_R(\syz^n_RM)+n=\rGdim_R(\syz^n_R M)+n \leq \rGdim_S(M)+n.
\end{equation}
Here the first inequality, in view of \cite[1.3.5]{BH}, follows from Proposition \ref{p2}, the second inequality follows from Theorem \ref{nzdse} and Remark \ref{nzdseremark}, and the equality is due to \ref{lr}.
\end{proof}

%%%%%%%%%%%%%%%%%%%%%%
%%%%%%%%%%%%%%%%%%%%%%

\section{Proof of Proposition \ref{213}}

%This section is devoted to a proof of Proposition \ref{213}; first we prove:

%\begin{chunk} \label{212} Let $R$ be a local ring, and let $\Hdim$ be a homological dimension on $R$-modules satisfying the condition \ref{setup}(iii).
%Then the following hold:
%\begin{enumerate}[\rm(1)]
%\item If $M$ is an $R$-module, then $\rHdim_R(M\oplus F)=\rHdim_R(M)$ for each free $R$-module $F$. 
%\item If $0 \to F \to N \to M \to 0$ is a short exact sequence of $R$-modules, where $F$ is free, then it follows that $\rHdim(M)\le \rHdim(N)$.
%\end{enumerate}
%\end{chunk}

%\begin{lem} Let $R$ be a local ring and let $\Hdim_R$ be a homological dimension of $R$-modules which satisfies the condition \ref{setup}(iii). 
%Let $0\to H \to Y \to W \to 0$ be a short exact sequence of $R$-modules, where $H$ is free. Then it follows that $\rHdim_R(W)\le \rHdim_R(Y)$.
%\end{lem}
%Then, for an integer $r\geq 0$, the following hold:
%\begin{enumerate}[\rm(a)]
%\item[$(\text{a}_r)$] Let $M$ and $F$ be $R$-modules, where $F$ is free and $\rHdim_R(M\oplus F)\le r$. Then it follows that $\rHdim_R(M)\le \rHdim_R(M\oplus F)$.
%\item[$(\text{b}_r)$] 
%\end{enumerate}
%\end{lem}

%\newtheorem{claim}{Claim}
%\newtheorem*{claim*}{Claim}

%This section is devoted to a proof of Proposition \ref{213}; the proof of the proposition is given at the end of this section.
%\begin{enumerate}[\rm(a)]
%\item[$(\text{a}_r)$] If $A$ and $H$ are $R$-modules, where $\rHdim_R(A\oplus H)\le r$ and $H$ is free, then it follows that $\rHdim_R(A)\le \rHdim_R(A\oplus H)$

This section is devoted to a proof of Proposition \ref{213}. We start by noting that:

\begin{rem}\label{son} Let $R$ be a local ring, $M$ be an $R$-module, $F$ be a free $R$-module, and let $\Hdim$ be a homological invariant of $R$-modules. If $\{K_0,\dots,K_r\}$ is a reducing $\Hdim$-sequence of $M$, then one can find some free $R$-modules $G_1,G_2,\dots,G_r$ such that
$\{K_0, K_1\oplus G_1,\dots, K_r\oplus G_r\}$ is a reducing $\Hdim$-sequence of $M\oplus F$; see Definition \ref{rdim}. Therefore $\rHdim_R(M\oplus F)\le \rHdim_R(M)$. \qed
\end{rem}

Proposition \ref{213}, in view of Remark \ref{son}, is subsumed by the following result:

\begin{prop} \label{sonprop} Let $R$ be a local ring and let $\Hdim$ be a homological invariant of $R$-modules which satisfies the condition (iii) of \ref{setup}. Let $0\to F \to N \to M \to 0$ be an exact sequence of nonzero $R$-modules, where $F$ is free. Then it follows that $\rHdim_R(M)\le \rHdim_R(N)$.
\end{prop}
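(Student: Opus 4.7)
The plan is to induct on $r = \rHdim_R(N)$. The base case $r = 0$ means $\Hdim_R(N) < \infty$, and then condition (iii) of \ref{setup} immediately gives $\Hdim_R(M) < \infty$, so $\rHdim_R(M) = 0$.

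For the inductive step, use Remark \ref{WH}(i) to choose an exact sequence $0 \to N^{\oplus a} \to K \to \Omega_R^n N^{\oplus b} \to 0$ with $\rHdim_R(K) = r - 1$. Taking $a$ copies of the hypothesis sequence, stacking it above the chosen one, and setting $K' := K/F^{\oplus a}$, a $3 \times 3$ diagram chase produces
\begin{equation*}
0 \to F^{\oplus a} \to K \to K' \to 0 \quad \text{and} \quad 0 \to M^{\oplus a} \to K' \to \Omega_R^n N^{\oplus b} \to 0.
\end{equation*}
Applying the inductive hypothesis to the left sequence yields $\rHdim_R(K') \leq r - 1$.

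It remains to convert the right-hand sequence into one of the form $0 \to M^{\oplus a} \to K'' \to \Omega_R^n M^{\oplus b} \to 0$ with $\rHdim_R(K'') \leq r - 1$; once this is done, Remark \ref{WH}(ii) finishes the argument. When $n \geq 1$, the horseshoe lemma applied to $0 \to F^{\oplus b} \to N^{\oplus b} \to M^{\oplus b} \to 0$, together with the vanishing of higher syzygies of free modules, supplies free $R$-modules $G_1, G_2$ with $\Omega_R^n N^{\oplus b} \oplus G_2 \cong \Omega_R^n M^{\oplus b} \oplus G_1$. Adjoining $G_2$ to the middle and right of the previous sequence and splitting off $G_1$ via its projectivity produces the desired $0 \to M^{\oplus a} \to K'' \to \Omega_R^n M^{\oplus b} \to 0$ with $K' \oplus G_2 \cong K'' \oplus G_1$; Remark \ref{son} combined with the inductive hypothesis (applied to $0 \to G_1 \to K'' \oplus G_1 \to K'' \to 0$) then gives $\rHdim_R(K'') \leq r - 1$. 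When $n = 0$, one instead carries out a pullback of $K' \to N^{\oplus b}$ along $F^{\oplus b} \hookrightarrow N^{\oplus b}$ and a subsequent pushout along the projection $M^{\oplus a} \oplus F^{\oplus b} \twoheadrightarrow M^{\oplus a}$; this yields $0 \to F^{\oplus b} \to K' \to K'' \to 0$ together with $0 \to M^{\oplus a} \to K'' \to M^{\oplus b} \to 0$, and the inductive hypothesis applied to the first sequence delivers $\rHdim_R(K'') \leq r - 1$.

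The main obstacle will be the bookkeeping of parasitic free summands. Higher syzygies of $N$ and $M$ are only stably isomorphic, and the extra free summands needed to reconcile them recur throughout the construction of $K''$. It is precisely the inductive hypothesis that absorbs these summands: applied to a split sequence $0 \to G \to G \oplus K'' \to K'' \to 0$ with $G$ free, it shows that $\rHdim_R(K'') \leq \rHdim_R(G \oplus K'')$. This circular-looking use is what forces induction on $r$ rather than a direct construction of a reducing $\Hdim$-sequence for $M$ from one for $N$.
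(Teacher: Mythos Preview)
Your proof is correct and follows the same overall architecture as the paper's: induction on $r=\rHdim_R(N)$, a pushout to replace $N^{\oplus a}$ by $M^{\oplus a}$ in the reducing sequence, and the case split $n\ge 1$ versus $n=0$.

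The one notable divergence is your handling of Case $n=0$. The paper stops at the sequence $0\to M^{\oplus a}\oplus F^{\oplus b}\to L\to M^{\oplus b}\to 0$, pads it to $0\to (M\oplus F^{\oplus c})^{\oplus a}\to L\oplus F^{\oplus d}\to (M\oplus F^{\oplus c})^{\oplus b}\to 0$, and then invokes a separately proved \emph{Claim} (that $\rHdim_R(A)\le\rHdim_R(A\oplus H)$ for $H$ free, itself established via another pushout and the induction hypothesis). Your extra pushout along $M^{\oplus a}\oplus F^{\oplus b}\twoheadrightarrow M^{\oplus a}$ accomplishes the same goal in one stroke, yielding $0\to F^{\oplus b}\to K'\to K''\to 0$ and $0\to M^{\oplus a}\to K''\to M^{\oplus b}\to 0$ directly; the induction hypothesis then applies to the first sequence. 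This is a genuine streamlining: it inlines the content of the paper's Claim into the main argument and avoids the padding step. Conversely, in Case $n\ge 1$ the paper uses the sharper fact $\Omega_R^n M\cong \Omega_R^n N\oplus H_n$ (so your $G_1$ can be taken to be zero), which spares the extra appeal to the induction hypothesis that you make to pass from $K''\oplus G_1$ to $K''$; your version still works, just with one more step.
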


\begin{proof}
To prove the claim, we assume $\rHdim_R(N)\le r$ for some integer $r\geq 0$, and proceed by induction on $r$. Note, if $r=0$, then the claim holds due to the condition (iii) of \ref{setup}. So we assume $r\geq 1$. 

%Observe, if $v\geq 1$, then this exact sequence yields the isomorphism $\syz_R^v M \cong \syz_R^v N$. \new{WHY? is it not $\syz_R^v(M)\cong \syz_R^v(N) \oplus R^{\oplus}$?}

It follows that there is a short exact sequence of $R$-modules $0 \to N^{\oplus a} \xrightarrow[]{\chi}  K \to \syz_R^n N^{\oplus b} \to 0$, where $\rHdim_R(K)=\rHdim_R(N)-1\le r-1$ and $a,b\ge 1$, $n\ge 0$ are integers; see Remark \ref{WH}(ii). Now we consider the following commutative diagram (with exact rows and columns) obtained by taking the pushout of the maps $\chi$ and $\psi^{\oplus a}$, where $0\to F \to N \xrightarrow[]{\psi}  M \to 0$ is the exact sequence we consider.
\[
\begin{tikzcd}
& 0 \arrow[d] & 0 \arrow[d] & & \\
& F^{\oplus a} \arrow[d] \arrow[r, equal] & F^{\oplus a} \arrow[d] & & \\
0 \arrow[r] & N^{\oplus a} \arrow[d, "\psi^{\oplus a}"] \arrow[r, "\chi"] & K \arrow[r] \arrow[d] & \Omega_R^n N^{\oplus b} \arrow[r] \arrow[d, equal] & 0 \\
0 \arrow[r] & M^{\oplus a} \arrow[d] \arrow[r] & L \arrow[r] \arrow[d]  & \Omega_R^n N^{\oplus b} \arrow[r] & 0 \\
& 0 & 0 & &  
\end{tikzcd} %\ar[r,"\alpha"] 
\]
Now, since $\rHdim_R(K)\le r-1$, we use the induction hypothesis on $r$ with the short exact sequence $0\to F^{\oplus a} \to K \to L \to 0$ obtained in the above diagram, and conclude that 
\begin{equation}\tag{\ref{sonprop}.1}
\rHdim_R(L)\le \rHdim_R(K)=\rHdim_R(N)-1.
\end{equation}
\emph{Case 1}: Suppose $n\geq 1$ and consider the exact sequence $0\to M^{\oplus a} \to L \to \syz_R^n N^{\oplus b} \to 0$. 

We take syzygies of the short exact sequence $0\to F \to N \to M \to 0$, and see that, for each $v\geq 1$, there exists a free $R$-module $H_v$ such that $\syz_R^v M \cong \syz_R^v N \oplus H_v$. So, by adding free summands to the exact sequence $0\to M^{\oplus a} \to L \to \syz_R^n N^{\oplus b} \to 0$, we obtain the exact sequence: 
\begin{equation}\tag{\ref{sonprop}.2}
0\to M^{\oplus a} \to L \oplus H_n^{\oplus b} \to \syz_R^n N^{\oplus b} \oplus{H_n^{\oplus b}} \to 0. 
\end{equation}
The sequence (\ref{sonprop}.2), in view of the isomorphism $\syz_R^n M \cong \syz_R^n N \oplus H_n$, gives the exact sequence: 
\begin{equation}\tag{\ref{sonprop}.3}
0\to M^{\oplus a} \to L \oplus H_n^{\oplus b} \to \syz_R^n M^{\oplus b} \to 0.
\end{equation}
Now we use (\ref{sonprop}.3) and conclude from Remarks \ref{WH}(ii) and \ref{son} that:
\begin{equation}\tag{\ref{sonprop}.4}
\rHdim_R(M)\le \rHdim_R(L \oplus H_n^{\oplus b} )+1 \leq \rHdim_R(L)+1. 
\end{equation}
Thus, since $\rHdim_R(K)+1=\rHdim_R(N)$, the proof is complete in view of (\ref{sonprop}.1) and (\ref{sonprop}.4).%N$. This yields the exact sequence $0\to M^{\oplus a} \to L \to \syz_R^n M^{\oplus b} \to 0$, and hence the inequality $\rHdim_R(M)\le \rHdim_R(L)+1$ by Definition \ref{rdim}. 
%Next we suppose $n=1$, that is, the case where $\syz_R M \cong \syz_R N \oplus H$ for some free $R$-module $H$. %Hence, if $n\geq 2$, we deduce that $\syz_R^n M \cong \syz_R^n 

\emph{Case 2}: Suppose $n=0$ and consider the exact sequence $0\to M^{\oplus a} \to L \to \syz_R^n N^{\oplus b} \to 0$. 

Recall that, by the hypothesis and the previous pushout diagram, we have the short exact sequences $0\to M^{\oplus a} \to L \to N^{\oplus b} \to 0$ and $0\to F^{\oplus b} \to N^{\oplus b} \xrightarrow[]{\psi^{\oplus b}}  M^{\oplus b} \to 0$. Next we take the pullback of the maps $L \to N^{\oplus b}$ and $\psi^{\oplus b}$, and obtain the following commutative diagram with exact rows and columns:
\[
\begin{tikzcd}
 & & 0 \ar[d] & 0 \ar[d] & \\
0 \ar[r] & M^{\oplus a} \ar[r] \ar[d,equal] & Y \ar[r] \ar[d] & F^{\oplus b} \ar[r] \ar[d, "\psi^{\oplus b}" ] & 0 \\
0 \ar[r] & M^{\oplus a} \ar[r] & L \ar[r] \ar[d] & N^{\oplus b} \ar[r] \ar[d] & 0\\
 & & M^{\oplus b} \ar[r,equal] \ar[d] & M^{\oplus b} \ar[d] &\\
 & & 0 & 0 &
\end{tikzcd}
\]
The top exact sequence in the above diagram shows that $Y\cong M^{\oplus a}\oplus F^{\oplus b}$ as $F^{\oplus b}$ is a free $R$-module. So the left vertical exact sequence in the above diagram gives the short exact sequence: 
\begin{equation}\tag{\ref{sonprop}.5}
0 \to M^{\oplus a}\oplus F^{\oplus b} \to L \to M^{\oplus b} \to 0. 
\end{equation}
We add, if needed, free $R$-modules to (\ref{sonprop}.5) and obtain an exact sequence for some integers $c, d\geq 0$: 
\begin{equation}\tag{\ref{sonprop}.6}
0 \to (M \oplus F^{\oplus c})^{\oplus a} \to L \oplus F^{\oplus d} \to (M \oplus F^{\oplus c})^{\oplus b} \to 0.
\end{equation}

Next we observe the following inequalities:
\begin{align}\notag{}
\rHdim(M) \le  \rHdim_R(M\oplus F^{\oplus c}) \leq \rHdim_R(L \oplus F^{\oplus d})+1 \le & \rHdim(L) + 1 \\ \notag{} \le & \rHdim(K)+1 \\ \notag{} = & \rHdim(N) \\ \notag{} \leq & r.
\end{align}
Here we obtain the second inequality by applying Remark \ref{WH}(ii) to (\ref{sonprop}.6), the third inequality is due to Remark \ref{son}, the fourth inequality and the equality follow from (\ref{sonprop}.1), the last inequality is our assumption, and the first inequality can be deduced by the next claim:

%Hence we see that $\rHdim_R(M\oplus F^{\oplus c})\le \rHdim_R(L \oplus F^{\oplus d})+1$. Therefore, we obtain the following: 
%Here only the first and the third inequality require justification; the third one is due to Remark \ref{son} and the first one follows by the following claim:
%for some free $R$-module $G$.
%Then using $(\text{a}_r)$, we get $\rHdim(M)\le \rHdim(M\oplus F^{\oplus c}) \le \rHdim(N)$.
\begin{Claim} Let $A$ and $H$ be $R$-modules such that $H$ is free and $\rHdim_R(A\oplus H)\leq r$. Then it follows $\rHdim_R(A)\le \rHdim_R(A\oplus H)$.
\end{Claim}

Proof of the Claim: We note, due to  Definition \ref{rdim} and Remark \ref{WH}(ii), that there exists a short exact sequence of $R$-modules $0 \to (A\oplus H)^{\oplus a} \to K \xrightarrow[]{p} \syz_R^{s}(A\oplus H)^{\oplus b} \to 0$, where $a,b\ge 1$, $s\ge 0$ are integers and $\rHdim_R(K)=\rHdim_R(A\oplus H)-1\le r-1$. Note also that $\syz_R^s(A \oplus H)^{\oplus b}\cong \syz_R^sA^{ \oplus b} \oplus G$ for some free $R$-module $G$. Thus we have an exact sequence of the form:
\begin{equation}\tag{\ref{sonprop}.7}
0 \to (A\oplus H)^{\oplus a} \to K \to \syz_R^sA^{ \oplus b} \oplus G \to 0.
\end{equation}
Now we use \ref{c26} with the sequence (\ref{sonprop}.7) and obtain the following exact sequence of $R$-modules:
\begin{equation}\tag{\ref{sonprop}.8}
0 \to (A \oplus H)^{ \oplus a} \xrightarrow[]{\alpha} K' \to \syz_R^{n}A^{\oplus b} \to 0, 
\end{equation}
where $K\cong K'\oplus G$. %$K'$ fits into a decomposition 

Next we consider the commutative diagram (with exact rows and columns) that is obtained by taking the pushout of the map $\alpha$ from (\ref{sonprop}.8) with the canonical surjection $(A\oplus H)^{\oplus a} \twoheadrightarrow A^{\oplus a}$:
\[
\begin{tikzcd}
& 0 \ar[d] & 0 \ar[d] & &\\
& H^{\oplus a} \ar[d] \ar[r,equal] & H^{\oplus a} \ar[d] & &\\
0 \ar[r] & (A\oplus H)^{\oplus a} \ar[r,"\alpha"] \ar[d] & K' \ar[r] \ar[d] & \syz_R^{n}A^{\oplus b} \ar[r] \ar[d,equal] & 0\\
0 \ar[r] & A^{\oplus a} \ar[r] \ar[d] & X \ar[r] \ar[d] & \syz_R^nA^{\oplus b} \ar[r] & 0\\
& 0 & 0 & &
\end{tikzcd}
\]
Note that, since $K\cong K'\oplus G$, we have an exact sequence $0 \to G \to K \to K' \to 0$. We use this exact sequence and, since $\rHdim_R(K)\le r-1$, conclude by the induction hypothesis on $r$ that 
\begin{equation}\tag{\ref{sonprop}.9}
\rHdim_R(K') \leq \rHdim_R(K) \leq r-1. 
\end{equation}

Next we consider the exact sequence obtained from the above diagram: $0 \to H^{\oplus a} \to K' \to X \to 0$. As $\rHdim_R(K')\le r-1$ by (\ref{sonprop}.9), we make use of the induction hypothesis on $r$ and conclude that
\begin{equation}\tag{\ref{sonprop}.10}
\rHdim_R(X)\le \rHdim_R(K') \leq \rHdim_R(K).
\end{equation}

We also consider the exact sequence obtained from the above diagram: $0 \to A^{\oplus a} \to X \to \syz_R^n A^{\oplus b} \to 0$. We deduce from this short exact sequence and Remark \ref{WH}(ii) that:
\begin{equation}\tag{\ref{sonprop}.11}
\rHdim_R(A) -1\le \rHdim_R(X).
\end{equation}
Finally, as $\rHdim_R(K)=\rHdim_R(A\oplus H)-1$, we see $\rHdim_R(A) \le \rHdim_R(A\oplus H)$ by (\ref{sonprop}.10) and (\ref{sonprop}.11).
This completes the proof of the Claim, and hence that of the proposition.%Thus, by using the above exact sequence $0 \to A^{\oplus a} \to X \to \syz_R^n A^{\oplus b} \to 0$ and what has been established so far, we deduce: $\rHdim_R(A) -1\le \rHdim_R(X)\le \rHdim_R(K')\le \rHdim_R(K)=\rHdim_R(A\oplus H)-1$.
\end{proof}

\section*{acknowlwdgements} The authors are grateful to Roger Wiegand for discussions, and for his helpful and valuable comments and suggestions on the manuscript.

\bibliography{a}

\def\cfudot#1{\ifmmode\setbox7\hbox{$\accent"5E#1$}\else
  \setbox7\hbox{\accent"5E#1}\penalty 10000\relax\fi\raise 1\ht7
  \hbox{\raise.1ex\hbox to 1\wd7{\hss.\hss}}\penalty 10000 \hskip-1\wd7\penalty
  10000\box7}
\providecommand{\bysame}{\leavevmode\hbox to3em{\hrulefill}\thinspace}
\providecommand{\MR}{\relax\ifhmode\unskip\space\fi MR }
% \MRhref is called by the amsart/book/proc definition of \MR.
\providecommand{\MRhref}[2]{%
  \href{http://www.ams.org/mathscinet-getitem?mr=#1}{#2}
}
\providecommand{\href}[2]{#2}
\begin{thebibliography}{10}

\bibitem{CA}
Tokuji Araya and Olgur Celikbas, \emph{Reducing invariants and total
  reflexivity}, Illinois J. Math. \textbf{64} (2020), no.~2, 169--184.

\bibitem{ACCK}
Tokuji Araya, Olgur Celikbas, Jesse Cook, and Toshinori Kobayashi, \emph{On
  modules with finite reducing {G}orenstein dimension}, preprint; posted at
  arXiv:2103.00253.

\bibitem{AT}
Tokuji Araya and Ryo Takahashi, \emph{On reducing homological dimensions over
  {N}oetherian rings}, Proc. Amer. Math. Soc. \textbf{150} (2022), no.~2,
  469--480.

\bibitem{AuBr}
Maurice Auslander and Mark Bridger, \emph{Stable module theory}, Mem. Amer.
  Math. Soc., No. 94, American Mathematical Society, Providence, R.I., 1969.

\bibitem{Av2}
Luchezar~L. Avramov, \emph{Infinite free resolutions}, six lectures on
  commutative algebra ({B}ellaterra, 1996), Progr. Math., vol. 166,
  Birkh\"auser, Basel, 1998, pp.~1--118.

\bibitem{LGH}
Luchezar~L. Avramov and Hans-Bj{\o}rn Foxby, \emph{Locally {G}orenstein
  homomorphisms}, Amer. J. Math. \textbf{114} (1992), no.~5, 1007--1047.

\bibitem{AFGD}
\bysame, \emph{Ring homomorphisms and finite {G}orenstein dimension}, Proc.
  Lond. Math. Soc. \textbf{75} (1997), no.~2, 241--270.

\bibitem{AGP}
Luchezar~L. Avramov, Vesselin~N. Gasharov, and Irena~V. Peeva, \emph{Complete
  intersection dimension}, Inst. Hautes \'Etudes Sci. Publ. Math. (1997),
  no.~86, 67--114 (1998).

\bibitem{Be}
Petter~A. Bergh, \emph{Modules with reducible complexity}, J. Algebra
  \textbf{310} (2007), 132--147.

\bibitem{Berghredcx2}
\bysame, \emph{Modules with reducible complexity {II}}, Comm. Algebra
  \textbf{37} (2009), no.~6, 1908--1920.

\bibitem{BH}
Winfried Bruns and J{{\"u}}rgen Herzog, \emph{Cohen-{M}acaulay rings},
  Cambridge Studies in Advanced Mathematics, vol.~39, Cambridge University
  Press, Cambridge, 1993.

\bibitem{CDKM}
Olgur Celikbas, Souvik Dey, Toshinori Kobayashi, and Hiroki Matsui, \emph{Some
  characterizations of local rings via reducing dimensions}, preprint (2022).

\bibitem{CET}
Olgur Celikbas and Ryo Takahashi, \emph{On the second rigidity theorem of
  {H}uneke and {W}iegand}, Proc. Amer. Math. Soc. \textbf{147} (2019), no.~7,
  2733--2739.

\bibitem{Gdimbook}
Lars~Winther Christensen, \emph{Gorenstein dimensions}, Lecture Notes in Math.,
  vol. 1747, Springer-Verlag, Berlin, 2000.

\bibitem{Larsthesis}
\bysame, \emph{Semi-dualizing complexes and their {A}uslander categories},
  Trans. Amer. Math. Soc. \textbf{353} (2001), no.~5, 1839--1883.

\bibitem{DaoTak}
Hailong Dao and Ryo Takahashi, \emph{Classification of resolving subcategories
  and grade consistent functions}, Int. Math. Res. Not. (2015), no.~1,
  119--149.

\bibitem{GotoUlrich}
Shiro Goto, Kazuho Ozeki, Ryo Takahashi, Kei-Ichi Watanabe, and Ken-Ichi
  Yoshida, \emph{Ulrich ideals and modules}, Math. Proc. Cambridge Philos.
  Soc., vol. 156, Cambridge University Press, 2014, pp.~137--166.

\bibitem{Gul}
Tor~H. Gulliksen, \emph{On the deviations of a local ring}, Math. Scand.
  \textbf{47} (1980), no.~1, 5--20.

\bibitem{IW}
Osamu Iyama and Michael Wemyss, \emph{The classification of special
  {C}ohen-{M}acaulay modules}, Math. Z. \textbf{265} (2010), no.~1, 41--83.

\bibitem{JS}
David~A. Jorgensen and Liana~M. {\c{S}}ega, \emph{Independence of the total
  reflexivity conditions for modules}, Algebr. Represent. Theory \textbf{9}
  (2006), no.~2, 217--226.

\bibitem{KoTa}
Toshinori Kobayashi and Ryo Takahashi, \emph{Rings whose ideals are isomorphic
  to trace ideals}, Math. Nachr. \textbf{292} (2019), no.~10, 2252--2261.

\bibitem{TheBook}
Graham~J. Leuschke and Roger Wiegand, \emph{Cohen-{M}acaulay representations},
  Math. Surveys Monogr., vol. 181, American Mathematical Society, Providence,
  RI, 2012.

\bibitem{TN}
Saeed Nasseh and Ryo Takahashi, \emph{Local rings with quasi-decomposable
  maximal ideal}, Math. Proc. Cambridge Philos. Soc. \textbf{168} (2020),
  no.~2, 305--322.

\bibitem{Sally}
Judith~D. Sally, \emph{Cohen-{M}acaulay local rings of maximal embedding
  dimension}, J. Algebra \textbf{56} (1979), no.~1, 168--183.

\bibitem{TakG}
Ryo Takahashi, \emph{On {G}-regular local rings}, Comm. Algebra \textbf{36}
  (2008), no.~12, 4472--4491.

\bibitem{Yass}
Siamak Yassemi, \emph{G-dimension}, Math. Scand. \textbf{77} (1995), no.~2,
  161--174.

\end{thebibliography}
\bibliographystyle{amsplain}
\end{document}